\documentclass[10pt]{amsart}

\usepackage[british]{babel}
\usepackage[utf8]{inputenc}
\usepackage[T1]{fontenc}
\usepackage{amsfonts}
\usepackage{amssymb}
\usepackage{amsmath}
\usepackage{amsthm}
\usepackage{latexsym}
\usepackage{color}
\usepackage{amscd}
\usepackage{graphicx}
\usepackage{tikz}
\usetikzlibrary{arrows.meta,decorations.pathreplacing,calc}
\definecolor{dfone}{RGB}{0,114,178}     
\definecolor{dftwo}{RGB}{213,94,0}      
\definecolor{dftheta}{RGB}{106,61,154}  
\definecolor{dfgrey}{RGB}{127,127,127}
\providecommand\figdata{data/}
\renewcommand\figdata{figures/data/}
\usepackage{mathtools}
\definecolor{paperlink}{RGB}{0,0,114}
\usepackage[colorlinks=true,linktoc=all,
pdfstartview=FitV,linkcolor=paperlink,citecolor=paperlink,
urlcolor=paperlink]{hyperref}
\usepackage{parskip}

\usepackage[margin=1.3in]{geometry}

\numberwithin{equation}{section}

\newtheorem{theorem}{Theorem}[section]
\newtheorem{proposition}[theorem]{Proposition}

\theoremstyle{definition}

\newtheorem{conjecture}[theorem]{Conjecture}

\theoremstyle{remark}
\newtheorem{remark}[theorem]{Remark}

\newcommand\N{{\mathbb N}}
\newcommand\R{{\mathbb R}}
\newcommand\T{{\mathbb T}}
\newcommand\C{{\mathbb C}}

\newcommand\Z{{\mathbb Z}}

\newcommand\dx{{\frac{\mathrm d}{\mathrm dx}}}

\newcommand{\dd}{{\, \mathrm d}}

\newcommand{\Id}{\hbox{Id}}

\let\oldmarginpar\marginpar
\renewcommand\marginpar[1]{\-\oldmarginpar[\raggedleft\footnotesize #1]%
{\raggedright\footnotesize #1}}

\newcommand{\norm}[1]{\left\| #1 \right\|}
\newcommand{\abs}[1]{\lvert#1\rvert}
\newcommand{\TV}{\mathrm{TV}}

\allowdisplaybreaks

\title[Döblin--Fourier cancellation]{Döblin--Fourier cancellation and kinetic Aleksandrov estimates}

\author[Amélie Loher]{Amélie Loher}
\address{Amélie Loher: All Souls College, University of Oxford.}
\email{amelie.loher@all-souls.ox.ac.uk}
\author[Connor Mooney]{Connor Mooney}
\address{Connor Mooney: Department of Mathematics, UC Irvine.}
\email{mooneycr@uci.edu}
\author[Clément Mouhot]{Clément Mouhot}
\address{Clément Mouhot: DPMMS, University of Cambridge.}
\email{c.mouhot@dpmms.cam.ac.uk}

\date{2 October 2026}

\begin{document}

\begin{abstract}
  We develop a cancellation mechanism for linear second-order kinetic equations in non-divergence form with measurable uniformly elliptic coefficients. We decompose the dynamics into spatial waves and follow two families of velocity paths along which the waves acquire nearly opposite phases. The parabolic Krylov--Safonov theory gives a common lower bound for the two velocity marginals. The corresponding contributions cancel up to a small phase error, producing a contraction \textit{à la Döblin}. Iterating this contraction yields exponential Fourier decay estimates with enhanced dissipation.
  We then present two applications.

 The first contribution is a kinetic Aleksandrov estimate: a maximum principle in which the source is measured in an $L^p$ norm. For rough coefficients $A(t,v)$ independent of position, we obtain the estimate for every $p>2n+1$, where $n$ is the dimension of position and velocity. The Fourier decay also gives spatial smoothness, and parabolic regularity gives H\"older continuity in time and velocity. For autonomous coefficients $a(x,v)$ in dimension one, position serves as time away from zero velocity. A Harnack comparison controls returns to small velocity intervals and yields the estimate for every $p>4$. Both thresholds are optimal among those valid for all ellipticity ratios.

The second contribution concerns position on the torus and velocity on the sphere, with measurable uniformly elliptic diffusion coefficients $A(t,v)$ independent of position. We prove enhanced dissipation with the optimal square-root frequency power and Gevrey regularity in position. For autonomous coefficients $A=A(v)$, we also obtain exponential convergence to equilibrium in $L^2$ weighted by the invariant velocity measure, and a quantitative spectral gap. To our knowledge, these are the first results on decay established for laws of kinetic equations with rough coefficients.

\end{abstract}

\maketitle

\tableofcontents

\section{Introduction}

For elliptic and parabolic equations in non-divergence form with measurable uniformly elliptic coefficients, the classical maximum principle bounds a sub-solution by its boundary values and the $L^\infty$ norm of the positive part of the source. Aleksandrov estimates replace this source norm by the $L^n$ and $L^{n+1}$ norms, respectively. The elliptic estimate grew out of Aleksandrov's normal-map argument and the work of Aleksandrov, Bakelman and Pucci~\cite{Aleksandrov1960,Bakelman1961,Pucci1966}; its parabolic counterpart is due to Krylov and Tso~\cite{Krylov1976,Krylov1979,Tso1985}. These estimates underlie the Harnack theory of Krylov and Safonov~\cite{KrylovSafonov}.

We consider classical sub-solutions $u(t,x,v)$ to the kinetic equation:
\[
  \mathcal L u:=\partial_tu+v\cdot\nabla_xu-\mbox{Tr}\bigl(A(t,x,v)D_v^2u\bigr)\leq f,
  \qquad
  \lambda \, \mbox{Id}_n\leq A(t,x,v)\leq\Lambda \, \mbox{Id}_n.
  \refstepcounter{equation}\tag{\theequation}
    \label{eq:kin}
\]
This is the non-divergence form of what is often called \emph{Kolmogorov equations with rough coefficients} in the recent literature. Diffusion acts only in velocity, and its effect on position must pass through transport. Here $A$ is symmetric and measurable, $0<\lambda\leq\Lambda$, and $n$ is the common dimension of position and velocity. By a classical sub-solution we mean $u\in C^{1,1,2}$ in the interior of the domain considered, with the inequality holding almost everywhere. Our primary motivation is to derive Aleksandrov estimates in the kinetic setting, namely, bounds for $u$ in terms of its boundary values and the $L^p$ norm of the positive part of the source. For sufficiently large $p$, such an estimate is proved in the preprint~\cite{HaoZhang2026} and announced in the forthcoming work~\cite{NiebelForthcoming}; see Subsection~\ref{ss:limit}.

\subsection{The Döblin--Fourier cancellation estimate}
\label{ss:df-intro}

Let us explain the new estimate at the core of the paper when $A=A(t,v)$. We denote by $K_{s,t}(x,v,w)$ the adjoint fundamental solution, from starting time and velocity $(s,v)$ to terminal time and velocity $(t,w)$, with position displacement $x$. Its Fourier transform in $x$, denoted by $\widehat K_{s,t}^\xi(v,\cdot)$, is a complex measure of the terminal velocity $w$. The estimate proved in Section~\ref{s.fourier} is
\[
  \Big\Vert\widehat K_{s,t}^\xi(v,\cdot)\Big\Vert_{\TV}
  \leq C\exp\bigl(-c(t-s)\lambda^{\frac{1}{3}}\abs\xi^{\frac{2}{3}}\bigr),
  \qquad 0\leq s<t\leq1,\quad v,\xi\in\R^n,
  \refstepcounter{equation}\tag{\theequation}
  \label{eq:DF}
\]
with $c,C>0$ depending only on $n,\frac{\Lambda}{\lambda}$. A similar estimate is proved for autonomous coefficients $a(x,v)$ in dimension one, after a change of variable that exchanges the roles of time and position on intervals where velocity stays away from zero. The factor $\lambda^{\frac{1}{3}}$ in~\eqref{eq:DF} exhibits the enhanced dissipation studied for constant diffusion by Bedrossian--Coti Zelati~\cite{BedrossianCotiZelati2017} and Coti Zelati--Gallay~\cite{CotiZelatiGallay2023}, here with merely measurable diffusion coefficients depending on time and velocity.

\subsection{The first result}
\label{ss:main-result}

Combining the D\"oblin--Fourier cancellation estimate with the parabolic Aleksandrov estimate for the zero Fourier mode gives the following maximum principles. Their thresholds are optimal among those that are uniform in the ellipticity ratios for the stated coefficient classes.


The class of equations~\eqref{eq:kin} is invariant under the kinetic scaling $(t,x,v)\mapsto (r^2t,r^3x,rv)$ for $r>0$ and the Galilean transformation $(t,x,v)\mapsto(t+t_0,x+x_0+tv_0,v+v_0)$. This motivates, for \(P_0:=(t_0,x_0,v_0)\), the definition of the kinetic cylinder
\[
Q_r^-(P_0):=\bigl\{t_0-r^2<t<t_0,\ \abs{v-v_0}<r,\ 
\abs{x-x_0-(t-t_0)v_0}<r^3\bigr\},
\]
and we denote its initial, velocity, and incoming spatial boundaries by \(\partial_kQ_r^-(P_0)\):
\[
  \begin{aligned}
  \partial_kQ_r^-(P_0):=\overline{Q_r^-(P_0)}\cap\Bigl(
    &\bigl\{t=t_0-r^2\bigr\}\cup\bigl\{|v-v_0|=r\bigr\}\\
    &\cup\bigl\{|x-x_0-(t-t_0)v_0|=r^3,\ (v-v_0)\cdot (x-x_0-(t-t_0)v_0)\leq0\bigr\}\Bigr).
  \end{aligned}
\]
When \(r=1\), we omit the subscript \(r\). The subclass of equations~\eqref{eq:kin} with \(A=A(t,v)\) enjoys the same invariances. In contrast, in dimension \(1\), the subclass with \(a=a(x,v)\) is invariant under the kinetic scaling but not under the full Galilean group; it is invariant only under translations in \(t\) and \(x\). This is why we retain a general base point in the statement below. Figure~\ref{fig:kinetic-scaling} illustrates the shape of \(Q_r^-(P_0)\).
\begin{figure}[t]
  \centering
  \begingroup
  \tikzset{every picture/.append style={yscale=0.82}}
\begin{tikzpicture}[>={Stealth[length=2.2mm]},lab/.style={font=\small},note/.style={font=\small}]
  \def\vz{1.8}   
  \pgfmathsetmacro\shear{\vz*0.9/4.4}
  \begin{scope}[x=4.4cm,y=1.4cm]
    \begin{scope}
      \clip (-1,0) rectangle (0,1.2);
      \fill[dfone!28] plot file {\figdata f2_v.dat} -- (0,0) -- (-1,0) -- cycle;
    \end{scope}
    \begin{scope}
      \clip (-1,-1.2) rectangle (0,0);
      \fill[dftwo!28] plot file {\figdata f2_v.dat} -- (0,0) -- (-1,0) -- cycle;
    \end{scope}
    \draw[dfgrey,dashed] (-1,-1) rectangle (0,1);
    \draw[thin] (-1,0) -- (0,0);
    \draw[line width=0.5pt] plot file {\figdata f2_v.dat};
    \fill (0,0) circle (1.4pt) node[below right,note] {$P_0$};
    \draw[decorate,decoration={brace,amplitude=3pt}] (0.05,1) -- (0.05,0) node[midway,right=3pt,note] {$r$};
    \draw[->] (-1.08,-1.25) -- (0.12,-1.25) node[right,lab] {$t$};
    \draw (-1,-1.25) -- ++(0,-0.05) node[below,note] {$t_0-r^2$};
    \draw (0,-1.25) -- ++(0,-0.05) node[below,note] {$t_0$};
    \draw[->] (-1.08,-1.25) -- (-1.08,1.3) node[above,lab] {$v$};
    \draw (-1.08,0) -- ++(-0.015,0) node[left,note] {$v_0$};
    \node[lab] at (-1.3,1.3) {(a)};
  \end{scope}

  \begin{scope}[shift={(6.85cm,0.9cm)},x=4.4cm,y=0.9cm]
    \begin{scope}[cm={1,\shear,0,1,(0,0)}]
      \filldraw[fill=dfgrey!10,draw=dfgrey,dashed] (-1,-1) rectangle (0,1);
      \draw[thin,densely dashed] (-1.08,0) -- (0.06,0);
      \draw[line width=0.5pt] plot file {\figdata f2_x.dat};
      \draw[<->,thin] (-1,0) -- (-1,-0.3766);
      \coordinate (dev) at (-1,-0.19);
    \end{scope}
    \fill (0,0) circle (1.4pt) node[below right,note] {$P_0$};
    \draw[decorate,decoration={brace,amplitude=3pt}] (0.05,1) -- (0.05,0) node[midway,right=3pt,note] {$r^3$};
    \node[note,rotate=20,above] at (-0.7,-1.28) {$x_0+(t-t_0)v_0$};
    \draw[thin] (dev) -- (-0.93,-2.95) node[right,note] {$-$(shaded area in (a))};
    \draw[->] (-1.08,-3.35) -- (0.12,-3.35) node[right,lab] {$t$};
    \draw (-1,-3.35) -- ++(0,-0.08) node[below,note] {$t_0-r^2$};
    \draw (0,-3.35) -- ++(0,-0.08) node[below,note] {$t_0$};
    \draw[->] (-1.08,-3.35) -- (-1.08,1.25) node[above,lab] {$x$};
    \draw (-1.08,0) -- ++(-0.015,0) node[left,note] {$x_0$};
    \node[lab] at (-1.3,1.25) {(b)};
  \end{scope}
\end{tikzpicture}
  \endgroup
  \caption{\small{Why kinetic cylinders have scales $r^2$, $r$, and $r^3$. \textbf{(a)} A velocity path confined to $\abs{v-v_0}\leq r$ over a time interval of length $r^2$. \textbf{(b)} The corresponding position path ending at $P_0$. Its deviation from the straight motion $x_0+(t-t_0)v_0$ is the negative integral of $v-v_0$, and is therefore bounded by $r^3$.}}
  \label{fig:kinetic-scaling}
\end{figure}

\begin{theorem}[Sharp kinetic Aleksandrov estimate]
  \label{thm:aleksandrov-sharp}
  For every real-valued Borel measurable function \(f\), any classical subsolution \(u\in C(\overline{Q^-(P_0)})\cap C^{1,1,2}(Q^-(P_0))\) of~\eqref{eq:kin}, with \(f_+\in L^p(Q^-(P_0))\), satisfies
  \[
    \sup_{\overline{Q^-(P_0)}}u
    \leq \sup_{\partial_k Q^-(P_0)}u_+
    +C_{n,\lambda,\Lambda,p}\,\|f_+\|_{L^p(Q^-(P_0))}
    \refstepcounter{equation}\tag{\theequation}
    \label{eq:abp-kin}
  \]
  provided either \(A=A(t,v)\) and \(p>2n+1\), or \(n=1\), \(A=a(x,v)\), and \(p>4\). Here, \(C_{n,\lambda,\Lambda,p}>0\) is a constructive constant depending only on \(n\), \(\lambda\), \(\Lambda\), and \(p\).
\end{theorem}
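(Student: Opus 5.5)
The plan is a duality/representation argument: reduce the estimate to an $L^{p'}$ bound on the kernel of the adjoint problem, and obtain that bound from the Döblin--Fourier estimate~\eqref{eq:DF} together with parabolic bounds for the velocity marginal.

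\textbf{Reduction.} By the weak maximum principle applied to $u-\sup_{\partial_kQ^-}u_+ - w$, it suffices to build, for $f_+\in L^p$, a supersolution $w\ge0$ of $\mathcal L w\ge f_+$ in $Q^-(P_0)$ with $w\ge0$ on $\partial_kQ^-$ and $\sup w\le C\|f_+\|_{L^p}$. I would extend $f_+$ by zero outside $Q^-$, extend $A$ to all times and velocities, and take $w$ to be the Duhamel solution on $(t_0-1,t_0)\times\R^{2n}$ with zero initial data. Since $w\ge0$ is nonnegative everywhere, the kinetic boundary conditions are automatic. Working with classical approximations (mollified $A$), the bound for $w(t,x,v)$ reduces to
\[
  \int_{t_0-1}^{t}\!\!\int K_{s,t}(y,v,w')\,f_+(s,x-\cdot,w')\,\mathrm dy\,\mathrm dw'\,\mathrm ds\le \|f_+\|_{L^p}\Bigl(\int_{t_0-1}^t\|K_{s,t}(\cdot,v,\cdot)\|_{L^{p'}}^{p'}\mathrm ds\Bigr)^{1/p'} .
\]
The whole task is therefore to show $\|K_{s,t}\|_{L^{p'}_{y,w'}}\lesssim (t-s)^{-\frac{2n}{p}}$, with $2n/p<1$ after the $p'$ power. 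By scaling, $\|K_{s,t}\|_{L^{p'}}\sim\tau^{-(4n)(1-1/p')/2}=\tau^{-2n/p}$ with $\tau=t-s$, and integrability of $\tau^{-2np'/p}$ near $0$ needs $2n/(p-1)<1$, i.e.\ $p>2n+1$. This is the predicted threshold.

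\textbf{Kernel bound for $A(t,v)$.} I would split $K$ in $x$-Fourier modes. From~\eqref{eq:DF}, rescaled to the time interval $\tau$ by the kinetic scaling, $\|\widehat K^\xi_{s,t}(v,\cdot)\|_{\TV}\le C e^{-c\tau\abs\xi^{2/3}}$. This gives $\sup_y\int K(y,v,w')\,\mathrm dw'\le\int\|\widehat K^\xi\|_{\TV}\,\mathrm d\xi\lesssim\tau^{-3n/2}$. For the velocity part, I would use the fact that the $w'$-density of $K$ at fixed position is controlled through the parabolic Krylov--Safonov/Aleksandrov theory for the velocity marginal, which carries the extra $\tau^{-n/2}$. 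Interpolating the $L^\infty$ bound $\tau^{-2n}$ with mass $1$ then gives the $L^{p'}$ bound.

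The \textbf{main obstacle} is that~\eqref{eq:DF} controls only a total variation norm in $w'$, not a density. Joint $L^\infty_{y,w'}$ control therefore needs a further argument. I would first try a Chapman--Kolmogorov splitting at time $(s+t)/2$: use~\eqref{eq:DF} on the first half for position regularity, and the parabolic $L^{(n+1)'}$-type bound of Krylov--Tso for the velocity density on the second half. Since $A$ does not depend on $x$, the position translation commutes with the evolution, so the two halves combine.

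\textbf{The case $n=1$, $A=a(x,v)$.} Here I would use the analogous Döblin--Fourier estimate in which position plays the role of time on $\{\abs v\ge\delta\}$. The equation becomes $\partial_x u+v^{-1}\partial_t u - v^{-1}a\,\partial_v^2u$ there, which is parabolic in $(x;t,v)$ with coefficients measurable in $(x,v)$. Away from small velocities the same kernel bound holds with exponent $p>2n+1=3$ replaced by the count appropriate to this parametrisation. Near $v=0$, I would decompose dyadically in $\abs v\sim 2^{-k}$. A Harnack comparison bounds the expected time spent in $\{\abs v<2^{-k}\}$ by $\lesssim 2^{-ck}$, and summing the dyadic contributions with Hölder gives a convergent geometric series exactly when $p>4$. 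The summation of these return times is the second delicate point.
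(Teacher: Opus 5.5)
Your reduction (a nonnegative Duhamel supersolution, Hölder against the adjoint kernel, and the threshold $p>2n+1$ from scaling) matches the paper. The gap is in the kernel estimate: the fixed-time bound you aim for cannot be obtained the way you propose, and the step you flag as the main obstacle is where the argument breaks.

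You want $\norm{K_{s,t}}_{L^{p'}_{y,w'}}\lesssim(t-s)^{-2n/p}$ by interpolating mass one with a joint bound $\sup_{y,w'}K_{s,t}\lesssim(t-s)^{-2n}$. The velocity factor $(t-s)^{-n/2}$ in that bound is supposed to come from the parabolic theory. No such fixed-time velocity bound holds uniformly in the class. Take $n\geq2$ and the autonomous field $A(v)=\lambda\Id+(\Lambda-\lambda)\frac{v\otimes v}{\abs v^2}$. The radial part of the velocity is then a Bessel process of dimension $1+(n-1)\frac{\lambda}{\Lambda}<n$. So at every positive time the spherical averages of $P_{s,t}(v,\cdot)$ behave like $\abs{w'}^{-(n-1)(1-\lambda/\Lambda)}$ near $w'=0$. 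Hence $\sup_{w'}P_{s,t}(v,w')=\infty$, and these suprema are unbounded along smooth regularisations with the same ellipticity.

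Krylov--Tso only controls $P$ in $L^{\frac{n+1}{n}}$ of \emph{space-time}. Splitting at $(s+t)/2$ for a fixed terminal time therefore cannot give a fixed-time density bound either.

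The paper never works pointwise in time:
\begin{itemize}
\item On each slab of elapsed time $\tau\in(T,2T)$ it splits at the \emph{fixed} time $s+\frac T2$. Then~\eqref{eq:DF} on the first piece gives the factor $e^{-cT\abs\xi^{2/3}}$ in total variation. The space-time bound applies to the second piece from every intermediate velocity. Together these give $\norm{\widehat K^\xi_{s,s+\tau}(v,\cdot)}_{L^\gamma((T,2T)\times\R^n_w)}\leq CT^{\beta_\gamma}e^{-cT\abs\xi^{2/3}}$, with $\beta_\gamma=\frac{n+2}{2\gamma}-\frac n2$.
\item Fourier inversion then yields only the mixed bound $\norm{\sup_xK}_{L^{\gamma}((T,2T)\times\R^n_w)}\leq CT^{\beta_\gamma-\frac{3n}{2}}$.
\item The interpolation must be done in this mixed form: $\int K^q\dd x\leq(\sup_xK)^{q-1}P$, followed by Hölder in $(\tau,w)$ with $\frac{q-1}{\gamma_0}+\frac1{q_1}=1$ and $\gamma_0=\frac{n+1}{n}$.
\item This gives $\norm{K}_{L^q((T,2T)\times\R^{2n})}^q\leq CT^{1-2n(q-1)}$, which sums over dyadic $T$ exactly when $q<1+\frac1{2n}$.
\end{itemize}
Your Hölder step only needs this space-time $L^{p'}$ norm, so no fixed-time estimate is ever required.

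For $n=1$ and $a(x,v)$, your sketch names the right ingredients but lacks the mechanism that produces $4$. The occupation bound in $\{\abs v\leq r\}$ needs no Harnack: Green's identity for $\sqrt{r^2+v^2}$ gives $\int_0^T\mathsf S_t\mathbf1_{\{\abs v\leq 3r\}}\dd t\leq Cr\sqrt T$. Feeding it into the position-as-time interval estimate through visit counts ($\sum_jm_j\leq C/r$, $m_j\leq C$) gives only $\norm{G_{P,I}}_{L^q}\lesssim r^{5/q-4}$. That is summable only for $q<\frac54$, which is the paper's preliminary estimate, not $p>4$. Two further things are needed:
\begin{itemize}
\item[(i)] the per-visit bound in the form $\norm{G_{P,I}}_{L^q}^q\leq C_qr^{6-4q}\sum_jm_j^q$, with visits counted by starting time on the scale $r^2$;
\item[(ii)] decay of the \emph{timing} of returns, $m_j\lesssim(1+j)^{-1/2}$.
\end{itemize}
Item (ii) comes from the pointwise-in-time bound $\mathsf S_t\mathbf1_{I^*}\lesssim r/\sqrt t$ for $t\geq9r^2$ and starting velocity $\abs v\leq3r$. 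That bound combines the occupation estimate with the Harnack comparison $\mathsf S_tF\leq C\mathsf S_\tau F$ for $2t\leq\tau\leq3t$ and $\abs v\leq\sqrt t$. Then $\sum_jm_j^q\lesssim r^{q-2}$ and $\norm{G_{P,I}}_{L^q}\lesssim r^{4/q-3}$, which is integrable in $\frac{\dd r}{r}$ exactly when $q<\frac43$, i.e.\ $p>4$. This Harnack comparison is not available a priori: it is derived from Hölder continuity, which is itself bootstrapped from the preliminary estimate. The argument therefore has to be run in two passes.
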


The case \(A=A(t,v)\) is established in Section~\ref{sec:amp}. The condition \(p>2n+1\) is optimal and independent of the ellipticity ratio $\frac\Lambda\lambda$. In addition, the Döblin–Fourier cancellation estimate yields interior spatial smoothness, as shown in Section~\ref{sec:space}, see~\eqref{e:x-gradient}; parabolic regularity then implies Hölder continuity in all variables, as proved in Section~\ref{sec:holder}, see~\eqref{e:holder}.

The case $a(x,v)$ in dimension one is established in Section~\ref{s:spatial-roughness}. No fixed exponent below four works for all ellipticity ratios; see Section~\ref{ss:scaling-obstructions}. For each fixed ratio $\kappa=\frac\Lambda\lambda$, however, a further argument using a homogeneous Bellman barrier in \cite{Companion} gives the estimate for every $p>p_\ast(\kappa)$, where
\[
  p_\ast(\kappa)=1+\beta_*(\kappa)\in[3,4).
\]
Here $\beta_*$ is the homogeneous adjoint exponent defined in~\cite[Appendix~B]{Companion}. In particular, the estimate includes $p=4$. The proof is in~\cite[Theorem~1.2]{Companion}; neither optimality of $p_\ast$ nor an estimate at $p=p_\ast$ is claimed. We construct a homogeneous barrier satisfying a differential inequality for every admissible diffusion coefficient; taking the minimum over those coefficients gives a stationary Bellman inequality. Together with the Harnack comparison, this barrier controls concentration in small position and velocity boxes and makes the estimates summable for some $p<4$. The construction and proof are given in~\cite{Companion}.

\subsection{The limitation}
\label{ss:limit}

Theorem~\ref{thm:aleksandrov-sharp} treats coefficients $A(t,v)$ in any dimension and autonomous coefficients $a(x,v)$ in dimension one. It does not cover general coefficients $A(t,x,v)$. The corresponding Krylov--Safonov problem was posed in Silvestre's survey~\cite[Conjecture~8.1]{Silvestre2022}. The missing step in this programme was the following kinetic Aleksandrov estimate:
\begin{conjecture}[Kinetic Aleksandrov estimate]
\label{conjecture1}
There is a finite $p_0=p_0(n,\lambda,\Lambda)>1$ such that, for every $p>p_0$, every classical sub-solution $u\in C(\overline Q)$ of~\eqref{eq:kin}, with $f_+\in L^p(Q)$, satisfies
\[
    \sup_{\overline Q}u
    \leq \sup_{\partial_kQ}u_++C\norm{f_+}_{L^p(Q)},
    \refstepcounter{equation}\tag{\theequation}
    \label{eq:abp-kin-general}
\]
where $\partial_kQ$ consists of the initial, diffusive and incoming transport boundary, excluding the relative interior of the final-time face, and $C$ depends only on $n,\lambda,\Lambda,p$ and $Q$.
\end{conjecture}
Abedin and Tralli prove Harnack inequalities under a Cordes--Landis condition or a uniform modulus of continuity~\cite{AbedinTralli}. Dong and Yastrzhembskiy obtain global $L^p$ estimates when $A$ is measurable in $t$ and has vanishing mean oscillation in $(x,v)$~\cite{DongYastrzhembskiy}. Zhu proves the stationary one-dimensional Hölder estimate using parabolic theory away from $v=0$ and a barrier near $v=0$~\cite{Zhu2026}. A proof of Conjecture~\ref{conjecture1}, produced by GPT-6 Astra, is announced in a collective work still in preparation~\cite{NiebelForthcoming}. The preprint of Hao and Zhang~\cite{HaoZhang2026} gives a similar AI-assisted proof and resolves Conjecture~\ref{conjecture1}. In both results, the Lebesgue exponent threshold tends to infinity with the ellipticity ratio.

The Fourier method used in the paper relies on the decoupling of waves along a transported variable, e.g. $x$ in the case $A(t,v)$. In fact, no Fourier decay tending to zero can hold uniformly for general coefficients with fixed ellipticity bounds. We take $n=1$, write $g_t(v):=(4\pi t)^{-\frac{1}{2}}e^{-\frac{v^2}{4t}}$ and $q_N(v):=N^{-3}e^{-v^2}\cos(Nv)$ for $N\in\N$, and define on $[0,1]\times(\R/2\pi\Z)\times\R$
\begin{align*}
  u_N(t,x,v)&=g_{t+2}(v)+\varepsilon\sin(Nx)q_N'''(v),\\
  a_N(t,x,v)&=\frac{g_{t+2}(v)+\varepsilon N\cos(Nx)\bigl(vq_N'(v)-2q_N(v)\bigr)}{u_N(t,x,v)}.
\end{align*}
The identity $(vq_N'-2q_N)''=vq_N'''$ gives $\partial_tu_N+v\partial_xu_N=\partial_v^2(a_Nu_N)$. Moreover, $\frac{|q_N'''|+N|vq_N'-2q_N|}{g_{t+2}}$ is bounded uniformly in $t,v,N$, since its terms are polynomials times a faster-decaying Gaussian. Thus by choosing $\varepsilon>0$ small enough, we get $a_N$ smooth with $\frac{1}{2}\leq a_N\leq\frac{3}{2}$, and $u_N$ is a positive probability density with normalised measure in $x$ so that 
\[
  \norm{\hat u_N(1,N,\cdot)}_{L^1_v}
  =\frac{\varepsilon}{2}\norm{q_N'''}_{L^1_v}
  \longrightarrow\frac{\varepsilon}{\sqrt\pi}>0.
\]
In particular, no function $\eta(q)\to0$ can bound
\[
  \norm{\widehat u(1,N,\cdot)}_{L^1_v}\leq\eta(|N|)
\]
for every smooth coefficient with $\frac12\leq a\leq\frac32$ and every initial probability density $u(0,x,v)$ for the adjoint equation. The solutions $u_N$ above contradict such a bound. A bound of the form~\eqref{eq:DF}, uniform also in the starting position, would imply such an estimate by integration against $u(0,x,v)\dd x\dd v$, and is therefore impossible. Periodically extending $a_N$ and projecting the full-space evolution onto the torus gives the same obstruction on $\R$.

\subsection{The merit}

Although Conjecture~\ref{conjecture1} has now been resolved, there are two reasons for presenting Theorem~\ref{thm:aleksandrov-sharp} for the coefficient classes above.

First, the available estimates for general coefficients $A(t,x,v)$ require a Lebesgue exponent above a threshold that tends to infinity with the ellipticity ratio. This is unsatisfactory in comparison with the elliptic and parabolic Aleksandrov maximum principles. For $A=A(t,v)$, Theorem~\ref{thm:aleksandrov-sharp} reaches the scaling threshold $2n+1$, which is optimal for each fixed ellipticity ratio. For autonomous $a(x,v)$ in dimension one, the threshold $4$ is optimal among thresholds valid for all ellipticity ratios. The optimal threshold in $[3,4)$ as a function of the ratio remains open. We discuss the scaling obstructions and counter-examples in Subsection~\ref{ss:scalings}.

Second, the proof relies on a cancellation mechanism~\eqref{eq:DF} that is of independent interest. This contraction in total variation norm is reminiscent of Döblin’s argument~\cite{Doeblin1940}; here, however, the cancellation takes place in Fourier space, motivating the term \emph{Döblin--Fourier cancellation}. Further details are provided in Subsection~\ref{ss:strategy}. The resulting estimate captures \emph{enhanced dissipation} for kinetic equations with rough coefficients and yields quantitative spectral-gap (hypocoercivity) estimates in compact settings. To highlight its significance, we present in Subsection~\ref{ss:second} a second main result establishing mixing estimates for a non-divergence-form kinetic diffusion process with rough coefficients, spherical velocities, and a periodic spatial domain.

\subsection{Scaling obstruction, counter-examples and a conjecture}
\label{ss:scalings}

Let us explore more thoroughly the issue of critical exponents that is raised by Theorem \ref{thm:aleksandrov-sharp}. 

\subsubsection{The main scaling obstruction}

To see why a smaller exponent than $2n+1$ is not possible in the kinetic Aleksandrov estimate, we take a constant matrix $A$ and a smooth localized bump $\tilde u$ of height one. We shrink the bump by setting $u_r(t,x,v)=\tilde u(r^{-2}t,r^{-3}x,r^{-1}v)$. Its height stays one, while its associated source scales as follows by a straightforward change of variables:
\[
  \norm{\mathcal Lu_r}_{L^p} =r^{\frac{4n+2}{p}-2}\norm{\mathcal L\tilde u}_{L^p}.
\]
Hence if $p<2n+1$, this norm tends to zero as $r$ tends to zero. The height of the bump therefore cannot be bounded by its source norm on a fixed cylinder (note that the support of $u_r$ shrinks with $r$). At $p=2n+1$, a single shrinking bump no longer gives a contradiction. Instead, we add nested bumps so that their heights diverge while their source norms remain bounded; see Section~\ref{ss:scaling-obstructions}. Thus even in the case of constant coefficients the kinetic Aleksandrov estimate requires $p>2n+1$. 

\subsubsection{What we learn from the elliptic and parabolic theories}

Analogous scaling arguments yield the lower bounds \(\frac{n}{2}\) and \(1+\frac{n}{2}\) in the elliptic and parabolic cases, respectively. But, if scaling gives a necessary threshold, the rough coefficients may require a larger exponent. The elliptic and parabolic precedents illustrate the distinction.

We write $\kappa:=\frac{\Lambda}{\lambda}$ for the ellipticity ratio. The known lower bounds for the critical exponents are, respectively,
\[
  \max\left\{\frac{n}{2},\frac{n\kappa}{\kappa+n-1}\right\},
  \qquad
  \max\left\{1+\frac{n}{2},\frac{n\kappa}{\kappa+n-1}\right\}.
\]
The elliptic comparison is for $n\geq2$. The first terms come from scaling and the second from the radial examples recalled in Section~\ref{ss:scaling-obstructions}. The elliptic lower bound is strictly below $n$ for every finite $\kappa$. Pucci conjectured that it is optimal; see~\cite{PucciExtremal1966,AstalaIwaniecMartin2006,Trudinger2020}. Reverse Hölder estimates show that some exponents strictly below $n$ in the elliptic case and $n+1$ in the parabolic case are allowed, with an improvement depending on $\kappa$~\cite{FabesStroock1984,Escauriaza1993,CrandallFokKocanSwiech1998,DongKitano2025}. The optimal elliptic threshold tends to $n$ as $\kappa\to\infty$; in dimension two the sharp dependence is known, and the strong estimate fails at the endpoint~\cite{AstalaIwaniecMartin2006}. For autonomous parabolic coefficients and $n\geq3$, the estimate holds for some $p\in(1+\frac{n}{2},n)$~\cite[Theorem~4.13]{KrylovLd}. More precisely, the cited theorem requires $p\geq d_0$ and $p>1+\frac{n}{2}$, where $d_0\in(\frac{n}{2},n)$ depends on the ellipticity bounds. The interval $(1+\frac{n}{2},n)$ is empty in dimensions one and two. Together with the lower bounds above, these estimates give the limiting autonomous threshold $\max\{1+\frac{n}{2},n\}$ as $\kappa\to\infty$.

A related obstruction occurs for non-divergence form equations on groups of Heisenberg type. Danielli, Garofalo and Nhieu~\cite{DanielliGarofaloNhieu2003} construct counterexamples showing that no fixed exponent below the homogeneous dimension $Q$ can give the maximum estimate for all ellipticity ratios. Here the coefficient bounds may depend on the exponent. The kinetic homogeneous dimension is $2+4n$; the comparison motivates the discussion below, but does not by itself determine the kinetic threshold.

These comparisons motivate the following conjecture in the kinetic case.

\subsubsection{The sharp exponent in the kinetic case}

We conjecture that, in the kinetic case, the answer varies with the variables on which $A$ depends, as follows:
\begin{conjecture}[The critical kinetic Aleksandrov exponent]
  \label{conjecture}
  The estimate~\eqref{eq:abp-kin-general} of Conjecture~\ref{conjecture1} holds for the following exponents, depending on $A$:
  \begin{itemize}
  \item[(a)] When $A=A(t,x)$, it holds for every $p>1+2n$, independently of the ellipticity ratio. 
  \item[(b)] When $A=A(t,v)$, it holds for every $p>1+2n$, independently of the ellipticity ratio.
  \item[(c)] When $A=A(t,x,v)$, it holds for every $p>p_c$, where $p_c\in[1+2n,4n)$ depends on the ellipticity ratio and goes to $4n$ as the ratio tends to infinity.
  \end{itemize}
\end{conjecture}

The value $4n$ is the homogeneous dimension of $(x,v)$, i.e. $3n+n$; it is conjectured to be the lower bound for an exponent independent of the ellipticity ratio by analogy with the parabolic Aleksandrov estimate. The value $1+2n$ is the scaling lower bound. As explained below, in cases (a) and (b) it is possible to go below $4n$ thanks respectively to the the divergence-form structure and our Fourier-Döblin cancellation estimate.


\subsubsection{What is known and what remains open}

Our result proves case (b), which reaches the optimal scaling lower bound, and gives the ellipticity-independent threshold $4$ in case (c) when $n=1$ and $A=A(x,v)$. Moreover, the autonomous counterexamples recorded in \cite{Companion} exclude any $p<4n$ independent of the ellipticity ratio in (c), for all $n \ge 1$, as explained in Section~\ref{ss:scaling-obstructions}. 

Case (a) has additional structure and is solved by the divergence-form theory: since $A(t,x)$ commutes with velocity derivatives, the equation is also in divergence form. Let us briefly recall how the divergence-form theory gives the sharp scaling range $p>1+2n$~\cite{PascucciPolidoro2003,GuerandMouhot2022}. Indeed, its fundamental solution has mass one and is bounded by $C(t-s)^{-2n}$. Its $L^q$ norm to the power $q$ is therefore bounded by $C(t-s)^{-2n(q-1)}$. Time integration, Duhamel's formula, H\"older's inequality and comparison give the estimate for $1<q<1+\frac{1}{2n}$, or $p>1+2n$ by duality. Case (b) also has additional structure, uncovered by this paper: the spatial smoothing following from the Fourier decay estimate reduces the problem to the parabolic setting in $(t,v)$, which permits the sharp kinetic range $p>1+2n$ down to the scaling obstruction (the optimal parabolic ellipticity-independent exponent $\max\{1+\frac{n}{2},n\}$ is always below this kinetic scaling lower bound).

For $n=1$ and $A(x,v)$, the companion paper \cite{Companion} also gives an ellipticity-dependent threshold strictly below four in case (c), as recorded after Theorem~\ref{thm:aleksandrov-sharp}. The optimal dependence on the ellipticity ratio remains open. In higher dimension and the case $A(t,x,v)$ where the coefficients depend on all variables, it remains open to prove that $4n$ is the ellipticity-independent lower bound and to obtain the exact dependency on the ellipticity ratio below $4n$.

\subsection{The second result}
\label{ss:second}

Let $n\geq2$, let $\mathbb S^{n-1}$ be the unit sphere, and let $\T^n=\R^n/(2\pi\Z^n)$. The spatial measure on the torus is normalised to have mass one. For a symmetric measurable tangent matrix $A(t,v)$ satisfying $\lambda\Id\leq A\leq\Lambda\Id$ on $T_v\mathbb S^{n-1}$, we consider
\begin{equation}
  \partial_tu+v\cdot\nabla_xu=A:\nabla_S^2u
  \quad\hbox{on }\T^n\times\mathbb S^{n-1},
  \label{e:spherical-equation}
\end{equation}
where $\nabla_S^2u$ is the covariant Hessian on the sphere. We write $K_{s,t}$ for a transition kernel associated with $v\cdot\nabla_x+A(t,v):\nabla_S^2$, as constructed in Section~\ref{s:mixing}, and $\widehat K_{s,t}^\xi$ for its spatial Fourier transform at $\xi\in\Z^n$. Note that frequencies are now discrete. 
\begin{theorem}[Mixing for rough spherical velocity diffusion]
  \label{thm:mixing}
  There are $C,c>0$, depending only on $n,\frac\Lambda\lambda$, such that
  \[
    \sup_{v\in \mathbb S^{n-1}}\norm{\widehat K_{s,t}^\xi(v,\cdot)}_{\TV}
    \leq C\exp\left(-c(t-s)
    \min\left\{\frac{\abs \xi^2}{\lambda},\lambda^{\frac{1}{2}}\abs \xi^{\frac{1}{2}}\right\}\right),
    \qquad s<t,\quad \xi \in\Z^n\setminus\{0\}.
    \refstepcounter{equation}\tag{\theequation}
    \label{e:spherical-DF}
  \]
  In particular, the rate is $\lambda^{\frac{1}{2}}\abs \xi^{\frac{1}{2}}$ when $\abs \xi \geq\lambda$, hence for every non-zero mode if $\lambda\leq1$. The frequency power $\frac12$ is optimal.

In the autonomous case $A=A(v)$, there exist a probability measure $\mu$ on $\mathbb S^{n-1}$ and a strongly continuous, positivity-preserving solution semigroup for~\eqref{e:spherical-equation} on $L^2(\pi)$, where $\pi=\dd x\otimes\mu$ and $\dd x$ is normalised Haar measure on $\T^n$. The measure $\mu$ is the unique invariant velocity probability for this semigroup. In particular,
\begin{equation}\label{e:spherical-invariant}
\int_{\mathbb S^{n-1}}A:\nabla_S^2\varphi\dd\mu=0
\qquad\forall\varphi\in C^\infty(\mathbb S^{n-1}).
\end{equation}
For every initial datum $u(s)\in L^2(\pi)$, the solution $u\in C([s,\infty);L^2(\pi))$ supplied by this semigroup conserves the mean $m=\int u(s)\dd\pi$ and satisfies
\begin{equation}\label{e:spherical-L2-gap}
\norm{u(t)-m}_{L^2(\pi)}
\leq C\exp\bigl(-c(t-s)\min\{\lambda,\lambda^{-1}\}\bigr)\norm{u(s)-m}_{L^2(\pi)},
\quad t\geq s.
\end{equation}
On the invariant subspace $\big\{f\in L^2(\pi):\int f\dd\pi=0\big\}$, the generator of this solution semigroup has spectrum contained in
\[
\bigl\{z\in\C:\operatorname{Re}z
\leq-c\min\{\lambda,\lambda^{-1}\}\bigr\}.
\]
\end{theorem}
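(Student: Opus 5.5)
The proof of \eqref{e:spherical-DF} follows the Döblin--Fourier scheme of Subsection~\ref{ss:df-intro}, adapted to the sphere. Fix $\xi\in\Z^n\setminus\{0\}$ and write $\widehat K^\xi_{s,t}(v,\dd w)=\mathbb E_v\bigl[e^{-i\xi\cdot X_t}\delta_{V_t}(\dd w)\bigr]$. Here $(X,V)$ is a (not necessarily unique) diffusion, constructed in Section~\ref{s:mixing} as a limit of smooth approximations, with $\dd X=V\dd t$ and $V$ a rough spherical diffusion. Since $\|\widehat K^\xi_{s,t}\|_{\TV}$ is submultiplicative under the Chapman--Kolmogorov composition, the whole argument reduces to a one-step contraction
\[
\sup_v\|\widehat K^\xi_{s,s+\tau}(v,\cdot)\|_{\TV}\le 1-\delta
\]
on a time step $\tau=\tau(\xi,\lambda)$, with $\delta>0$ depending only on $n,\frac\Lambda\lambda$. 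Iterating then gives the rate $\delta/\tau$. The two regimes in the minimum correspond to two choices of $\tau$:
\begin{itemize}
\item $\tau\simeq\lambda^{-1/2}|\xi|^{-1/2}$ when $|\xi|\ge\lambda$;
\item $\tau\simeq\lambda/|\xi|^2$ when $|\xi|<\lambda$. In this regime the velocity mixes on the sphere in time $\lambda^{-1}\lesssim\tau$, and the position displacement over time $\tau$ has variance of order $\tau/\lambda$, as in Taylor dispersion.
\end{itemize}

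\textbf{One-step contraction in the enhanced regime.} Rescale time by $\lambda$. Over a step of length $\tau$ the velocity diffuses by $r\simeq(\lambda\tau)^{1/2}$ on the sphere, and the position phase is $\xi\cdot X=\int\xi\cdot V$. I split the step in two halves. On the first half, the parabolic Krylov--Safonov Harnack inequality in local charts of $\mathbb S^{n-1}$ (uniformly elliptic, measurable coefficients) gives a Döblin minorisation of the velocity law by $c\,r^{-(n-1)}\mathbf 1_{B_r(w_0)}$. I choose $w_0$ so that $\xi\cdot w$ varies on $B_r(w_0)$ by an amount of order $|\xi|r$, which requires $w_0$ not near the poles $\pm\xi/|\xi|$.

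I then consider two families of velocity paths: those staying in $B_r(w_1)$, and those staying in $B_r(w_2)$, with $w_1,w_2$ chosen so that $|\xi\cdot(w_1-w_2)|\,\tau/2\approx\pi$. With $|\xi|\tau r\simeq 1$, this condition is exactly $\tau\simeq\lambda^{-1/2}|\xi|^{-1/2}$. Krylov--Safonov gives a common lower bound for the two velocity marginals, so both families carry a mass $\ge c$ with a common density on the terminal ball. Their phases differ by $\pi$, up to an error that is made small by shrinking the tube radius through a constant factor. The resulting cancellation removes a fixed fraction $\delta$ of the total variation.

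Near the poles $\xi\cdot w\approx\pm|\xi|$, the variation of $\xi\cdot w$ is only quadratic. This is why the rate is $|\xi|^{1/2}$ and not $|\xi|^{2/3}$ as in \eqref{eq:DF}. On the sphere $\xi\cdot w$ has range $[-|\xi|,|\xi|]$ and the mechanism above works from any starting point, because a bounded number of sub-steps moves the velocity away from the poles.

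\textbf{Optimality and the Taylor regime.} For optimality of the power $\frac12$, I take $A$ equal to the identity and a starting velocity $v=\xi/|\xi|$ at the pole. There the phase $\xi\cdot V$ is stationary to second order, and a WKB or explicit Gaussian computation shows that the decay rate is of order exactly $|\xi|^{1/2}$. For $|\xi|<\lambda$ I use the same cancellation at the slower scale. The velocity is already equilibrated, and the phase acquired is Gaussian-like with spread $|\xi|(\tau/\lambda)^{1/2}\simeq1$. Opposite-phase tubes are then replaced by two sets of paths whose time-averaged velocity differs by the required amount, which the Harnack inequality again controls.

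\textbf{The autonomous statements.} For $A=A(v)$, the existence of $\mu$ follows from the Krylov--Safonov Döblin minorisation on the compact sphere, which gives a uniformly ergodic velocity semigroup; uniqueness and \eqref{e:spherical-invariant} follow by the Krylov--Bogolyubov argument and passage to the limit. The semigroup on $L^2(\pi)$ is built from the kernel, which is a contraction on $L^\infty$; it is also a contraction on $L^1(\pi)$ because $\pi$ is invariant, hence on $L^2(\pi)$ by interpolation. Decomposing $u-m$ into Fourier modes, the $\xi=0$ mode is controlled by the velocity spectral gap, at rate of order $\lambda$. Each $\xi\ne0$ mode is controlled by \eqref{e:spherical-DF} in $L^\infty\to L^\infty$ and by duality in $L^1(\mu)$, then in $L^2(\mu)$ by interpolation. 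The rate is bounded below by $\min\{\lambda^{-1},\lambda^{1/2}\}\ge\min\{\lambda,\lambda^{-1}\}$ up to constants. The spectral inclusion then follows from the exponential decay bound by the Hille--Yosida resolvent formula.

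\textbf{Main obstacle.} The hardest step is making the phase error in the cancellation uniformly small, using only Krylov--Safonov information on a curved manifold and with measurable coefficients. The plan is to take tubes of radius $\epsilon r$, absorb the loss of mass (which is polynomial in $\epsilon$) into $\delta$, and handle starting points near the poles by a preliminary sub-step.
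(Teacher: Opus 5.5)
\textbf{Verdict: genuine gap.} Your reduction to a uniform one-step contraction followed by iteration matches the paper. The step that should produce the contraction in the enhanced regime does not work as written, for two reasons.

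\emph{The poles.} First, the scaling is inconsistent. With $r\simeq(\lambda\tau)^{1/2}$, the balance $\abs\xi\tau r\simeq1$ gives $\tau\simeq\lambda^{-1/3}\abs\xi^{-2/3}$, which is the equatorial scale of \eqref{eq:DF}, not $\lambda^{-1/2}\abs\xi^{-1/2}$. Second, the claim that ``a bounded number of sub-steps moves the velocity away from the poles'' is false. A step of length $\lambda^{-1/2}\abs\xi^{-1/2}$ moves the velocity only by about $(\lambda/\abs\xi)^{1/4}$, so reaching distance of order one takes time of order $\lambda^{-1}$ and yields only the rate $\lambda$. If your claim were true, the argument would give the rate $\abs\xi^{2/3}$ uniformly, contradicting the optimality of $\frac12$ that you also assert.

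The paper never leaves the pole. It writes $e=\frac{\xi}{\abs\xi}=av+b$ with $b\perp v$. With $\lambda=1$, it takes $r=\min\{r_0,\bar r_0\}$, where $\abs\xi(\abs b\bar r_0^3+\abs a\bar r_0^4)=1$. The excursion direction is $u=-\frac{b}{\abs b}$ if $a\geq0$ and $u=\frac{b}{\abs b}$ if $a<0$, so that the linear and quadratic changes of $e\cdot w$ have the same sign. At the pole this gives $r\simeq\abs\xi^{-1/4}$ and a construction time $r^2L\leq L_*\abs\xi^{-1/2}$. At other points the construction ends earlier. It is then padded with unrestricted evolution, which cannot increase total variation, up to the common time $(L_*+4\rho^2)\abs\xi^{-1/2}$.

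\emph{The common minorant.} Your two families, ``staying in $B_r(w_1)$'' and ``staying in $B_r(w_2)$'', cannot have both a small phase spread and a common minorant. In balls of radius $r$, the phase spread inside each family is of order one, comparable to the $\pi$ you want to create. In tubes of radius $\epsilon r$ around distinct centres, the terminal marginals are disjoint. A Harnack continuation long enough to merge them brings back an order-one phase error.

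The paper's fix has two parts:
\begin{enumerate}
\item It uses out-and-back paths that start and end at $v$, with accumulated phases differing by exactly $\pi$.
\item It then adds a short common continuation in $B_{4\rho}(v)$ for rescaled time $4\rho^2$. The Harnack minorant constant does not depend on $\rho$, while the added phase error is only $O(\rho^3)$.
\end{enumerate}
As a side remark, the mass kept in a tube of radius $\epsilon r$ is $e^{-C\epsilon^{-2}}$, not polynomial in $\epsilon$; since $\epsilon$ is fixed, this is harmless.

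\emph{The Taylor regime.} This part is only heuristic. Measurable $A$ gives no Gaussian structure. Moreover, paths or tubes prescribed over a normalised time of order $\abs\xi^{-2}\gg1$ keep only mass $e^{-C\abs\xi^{-2}}$, so no fixed $\delta$ comes out. The paper instead keeps the fixed-time construction built at unit frequency. At normalised frequency $\abs\xi<1$, the two phases differ by $\abs\xi\Delta$ with $\Delta\in[\pi-2\varepsilon,\pi+2\varepsilon]$. Hence $\abs{e^{-i\abs\xi\phi_1}+e^{-i\abs\xi\phi_2}}\leq2-\abs\xi^2\cos^2\varepsilon$. This gives a contraction by $\theta_*\abs\xi^2\cos^2\varepsilon$ per fixed step, which iterates to the rate $\abs\xi^2$; rescaling by $\lambda$ then gives $\frac{\abs\xi^2}{\lambda}$.

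\emph{The rest.} The autonomous part of your plan is sound in outline. Mode-by-mode interpolation between $L^\infty$ decay and the $L^1(\pi)$ contraction is a valid alternative to the paper's global total-variation interpolation. You still need the spatial reflection $u(t)=\mathsf R S^*_{t-s}\mathsf R\,u(s)$ to turn the kernel operator into the solution semigroup of \eqref{e:spherical-equation}. For optimality, testing against $1-e\cdot w$ gives $\norm{\widehat K^{Re}_{s,s+\tau}(e,\cdot)}_{\TV}\geq1-\frac{n-1}{2}\Lambda R\tau^2$ for every admissible $A$. This is all that is needed, and it is simpler than a WKB computation.
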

For constant spherical diffusion, Coti Zelati, Dietert and G\'erard-Varet~\cite[Proposition~1.6]{CotiZelatiDietertGerardVaret2023} obtained the square-root enhanced dissipation scale in $L^2$; hypocoercivity for smooth spherical velocity models was established in~\cite{GrothausStilgenbauer2014,BaudoinTardif2018}. Here the Fourier-kernel estimate is uniform in the starting velocity and time and requires only measurable, uniformly elliptic diffusion coefficients. The $L^2$ estimate does not require any formula or bounds on the invariant measure.

\subsection{Strategy}\label{ss:strategy}

For the Aleksandrov and interior regularity estimates, we first assume smooth coefficients and then pass to measurable coefficients as explained in Section~\ref{s:remarks}. The constants are independent of coefficient derivatives. Section~\ref{s:mixing} treats the spherical evolution directly for measurable coefficients, using the Markov construction described there.

\subsubsection{The cancellation argument}

The starting point is to decompose the dynamics into spatial waves. Transport along a velocity path shifts a wave by the spatial displacement accumulated along that path. We prescribe two curves around which velocity can evolve, choosing their accumulated displacements to differ by half a wavelength. The corresponding waves then have opposite phases, thus resulting in destructive interference.

Diffusion spreads mass around each curve, so we retain a small amount of the evolution in a tube around it. The Fourier phases in the spatial variable remain nearly opposite, while the two velocity marginals need not agree. The parabolic Harnack inequality provides a common minorant for the velocity marginals: a positive measure bounded above by both marginals. The contributions corresponding to this lower bound cancel up to the small phase error. This reduces the total variation (in velocity) for the given spatial frequency by a fixed factor, as in Döblin's argument~\cite{Doeblin1940}; see Figure~\ref{fig:doeblin-fourier}. The reduction is uniform in the starting velocity and can therefore be repeated. Scaling determines how long each repetition takes at a given frequency, yielding the exponential Fourier decay in~\eqref{eq:DF}.


\subsubsection{Variables for the cancellation argument}

For $A(t,v)$ we apply this argument in the original time variable. For $a(x,v)$ we use position as time on a velocity interval of fixed non-zero sign. In suitable coordinates, both cases reduce to an equation of the form 
\[
    \partial_\sigma+B(\sigma,y):D_y^2+b(y)\cdot\nabla_z
    \qquad\text{on }\R_\sigma\times D_y\times\R^n_z\,,
    \refstepcounter{equation}\tag{\theequation}
    \label{e:intro-Lop}
\]
where the diffusion acts in variables $y$, the transport acts in variables $z$, $D\subset\R^n$, $B$ is uniformly elliptic, and $b$ is smooth and uniformly non-degenerate. Since $B$ does not depend on $z$, each Fourier wave in $z$ evolves independently. The transport rate $b(y)$ changes with velocity, allowing different velocity paths to produce different phases. The case $A(t,v)$ corresponds to $b(y)=y$ on $\R^n$. In the case $a(x,v)$ in dimension one, the derivative of $b$ is bounded above and below by positive constants.

More generally, the same reduction is available for $A=A(\ell(t,x),v)$ with a linear form $\ell(t,x)=\alpha t+\beta\cdot x$, locally on bounded velocity regions where $\alpha+\beta\cdot v$ stays away from zero. Indeed, $(\partial_t+v\cdot\nabla_x)\ell=\alpha+\beta\cdot v$, so $\ell$ is monotone along transport there and can serve as time, with the appropriate orientation. A global estimate also requires combining these local bounds.


\subsubsection{The kinetic Aleksandrov estimate}

To prove Theorem~\ref{thm:aleksandrov-sharp} for $A=A(t,v)$, we combine Fourier decay with the parabolic Aleksandrov estimate. Integrating the adjoint kernel in position removes transport and gives its velocity marginal $P$. The parabolic estimate controls this marginal in $L^{\widetilde q}$ in time and velocity for $1\leq\widetilde q\leq1+\frac{1}{n}$. On each interval $(T,2T)$ of time elapsed since the start, Fourier decay gives a corresponding bound for the spatial supremum of $K$. Interpolation and summation over these intervals give an $L^q$ bound for the full space-time kernel when $1<q<1+\frac{1}{2n}$. Duhamel's formula and H\"older's inequality then give the source range $p>2n+1$.

For $a(x,v)$ in dimension one, the new transported variable involves the old time, on which the coefficient does not depend. Position can serve as time as long as velocity stays away from zero. Near zero we need narrower velocity intervals. A Harnack comparison controls how their returning mass decreases with elapsed time, allowing us to combine the interval estimates for every $p>4$; see Section~\ref{s:spatial-roughness}.

\subsubsection{Spatial regularity and Hölder regularity}

Exponential Fourier decay gives spatial smoothness by Fourier inversion. To obtain interior regularity, we localise the cancellation argument and bound the spatial derivatives of a solution. We then fix the position and treat $-v\cdot\nabla_xu$ as a bounded source in the parabolic equation in $(t,v)$. The parabolic Krylov--Safonov theory gives Hölder continuity in time and velocity, which combines with the spatial derivative bound to give Hölder continuity in all variables.

\subsubsection{Mixing estimates for rough spherical velocity diffusion}

On the sphere, we adapt the velocity paths in local charts. The same Harnack comparison and cancellation yield square-root frequency decay. Non-zero torus frequencies are bounded away from zero, while compactness of the sphere and parabolic Harnack give mixing of the zero mode. Together these give mixing in total variation. Interpolation with the bound supplied by positivity gives weighted $L^2$ decay. For autonomous coefficients, taking the adjoint and reflecting position transfers this decay to the solution semigroup and gives the spectral gap.

\textbf{Acknowledgements.}
The authors thank Luis Silvestre for sharing his knowledge and intuition about this problem through many discussions over the years. The first author acknowledges support of the Fondation Sciences Mathématiques de Paris. The second author was supported by NSF Grant DMS-2143668.

\textbf{Disclosure of AI assistance.}
The first author conducted an extensive interactive proof search with GPT-5.6 Sol through Codex, providing the authors’ notes as well as proposals and source material, and making numerous choices and interventions. During this process, the model generated the first probabilistic calculation underlying the Döblin--Fourier estimate and helped write a preliminary draft.
The authors checked, selected, rewrote, simplified, unified and interpreted the arguments presented here and take full responsibility for the manuscript. After this proof-development stage, the present short paper was written by the authors, with Codex used for proofreading and editorial revision. The figures were created with Claude-Opus-5.5. The companion manuscript~\cite{Companion}, which was drafted in part with Codex and is not intended for publication, contains the detailed Aleksandrov arguments for both coefficient classes, the passage to H\"older continuity, and the further Bellman improvement for autonomous coefficients in dimension one. It also records the stationary counterexample construction. 

\section{Döblin--Fourier cancellation}
\label{s.fourier}

We show that spatial waves decay by following two families of velocity paths. Transport along the two families produces waves with nearly opposite phases. To obtain cancellation, we need a common positive lower bound for their velocity marginals. The parabolic Harnack inequality gives this bound after both families evolve briefly in a fixed velocity ball.

We prove the estimate for smooth $A=A(t,v)$, with constants independent of its derivatives. Reflecting time and position turns the original equation into a terminal-value problem for
\[
    \mathcal K=\partial_s+v\cdot\nabla_x+\mbox{Tr}(A(s,v)D_v^2).
\]
Here $A(s,v)$ denotes the reflected coefficient $A(-s,v)$. We denote by $K_{s,t}(x,v,w)$ the adjoint fundamental solution of $\mathcal K$. It is non-negative and satisfies
\[
    \partial_t K_{s,t}+w\cdot\nabla_xK_{s,t}
    =\sum_{i,j}\partial_{w_iw_j}\bigl(A_{ij}(t,w)K_{s,t}\bigr),
    \qquad K_{s,s}=\delta_0(x)\delta_v(w).
\]
Thus $s,v$ are the starting time and velocity, $t,w$ the terminal time and velocity, and $x$ the position displacement. All kernels below have their two time indices in chronological order. For terminal data $g$ at time $t>s$, the solution of $\mathcal K U=0$ at $(s,x_0,v)$ is
\[
    \int_{\R^{2n}}g(x_0+x,w)K_{s,t}(x,v,w)\dd x\dd w.
\]
The kernel has mass one. Its evolution can be split at any intermediate time by composing the kernels for the two successive intervals. Integrating $K_{s,t}$ in time over $s<t<S$ gives the space-time Green measure. Integrating a source $F(t,x_0+x,w)$ against this measure gives the solution of $\mathcal K U=-F$ with zero terminal data at $S$, by Duhamel's formula.

We denote the velocity marginal by $P$:
\[
    P_{s,t}(v,w):=\int_{\R^n}K_{s,t}(x,v,w)\dd x.
\]
It is the parabolic adjoint fundamental solution in velocity. For a velocity domain $D$, the superscript $D$ on either kernel means that mass is discarded at the velocity boundary. Thus $K^D$ is the kinetic Dirichlet kernel, and $P^D$ is its velocity marginal; both have mass at most one and are bounded above by their full-space counterparts. We write $P_{s,t}(v,\dd w)=P_{s,t}(v,w)\dd w$, and likewise for $P^D$, when using these kernels as measures.

Given a Borel set $E\subset\R^n$, the Fourier transform is the complex measure
\[
    \widehat K_{s,t}^\xi(v,E)
    :=\int_{E}\int_{\R^n}e^{-i\xi\cdot x}K_{s,t}(x,v,w)\dd x\dd w .
    \refstepcounter{equation}\tag{\theequation}
    \label{e:fourier-kernel}
\]
At frequency zero,~\eqref{e:fourier-kernel} gives the velocity marginal $P_{s,t}$. At other frequencies, the exponential still has modulus one. Thus the variation measure of $\widehat K_{s,t}^\xi$ is bounded by $P_{s,t}$, and $\norm{\widehat K_{s,t}^\xi(v,\cdot)}_{\TV}\leq1$. We write $\widehat K^{D,\xi}_{s,t}$ for the Fourier transform of $K^D_{s,t}$. The cancellation will improve the mass bound at non-zero frequencies.

\begin{proposition}[Döblin--Fourier cancellation estimate]
  \label{p:decay}
  There are $C,c>0$, depending only on $n,\frac{\Lambda}{\lambda}$, such that, for $0\leq s<t\leq1$, $v,\xi\in\R^n$,
  \begin{equation}
    \label{e:decay-W}
    \norm{\widehat K_{s,t}^\xi(v,\cdot)}_{\TV}
    \leq C\exp\bigl(-c\,(t-s) \lambda^{\frac{1}{3}}\abs\xi^{\frac{2}{3}}\bigr).
  \end{equation}
\end{proposition}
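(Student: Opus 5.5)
The proof reduces \eqref{e:decay-W} to a single contraction step at unit scale and then iterates it.

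\emph{Normalisations.} By the kinetic scaling $(t,x,v)\mapsto(r^2t,r^3x,rv)$ and the time rescaling $t\mapsto\lambda t$, we may assume $\lambda=1$ and $\Lambda=\kappa$. The Fourier transform then transforms as $\xi\mapsto r^{3}\xi$. Galilean invariance removes the starting velocity: the shift $v\mapsto v+v_0$ multiplies $\widehat K^\xi$ by the unimodular factor $e^{-i(t-s)\xi\cdot v_0}$, which leaves the total variation unchanged. The core claim is the following.

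\emph{One-step contraction.} There are $\tau_0,\theta\in(0,1)$, depending only on $n,\kappa$, such that for $|\xi|=1$ and every $v$ with $|v|\le 1$,
\[
\norm{\widehat K_{s,s+\tau_0}^\xi(v,\cdot)}_{\TV}\le 1-\theta .
\]
The restriction $|v|\le1$ is harmless: by Galilean invariance the bound then holds for every starting velocity. By the scaling above, the same bound holds at frequency $|\xi|$ over a time $\tau_\xi=\tau_0|\xi|^{-2/3}$.

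The semigroup property and the bound $\norm{\widehat K^\xi(v,\cdot)}_{\TV}\le1$ give submultiplicativity. Precisely, $\widehat K^\xi_{s,t}=\widehat K^\xi_{s,r}\circ\widehat K^\xi_{r,t}$ as complex kernels, up to a phase that depends on the intermediate displacement, which is absorbed because we integrate against $e^{-i\xi\cdot x}$ with $x$ additive. Hence the TV norms multiply. After $\lfloor (t-s)/\tau_\xi\rfloor$ steps we get $(1-\theta)^{(t-s)|\xi|^{2/3}/\tau_0-1}$, which is \eqref{e:decay-W}. For $(t-s)|\xi|^{2/3}\lesssim1$ the trivial bound $1$ suffices.

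\emph{Proof of the one-step contraction.} Fix $\xi$ with $|\xi|=1$, and write $e=\xi$. On $[s,s+\tau_0]$ we prescribe two smooth velocity curves $\gamma_\pm$, both starting near $v$ and ending in a common ball $B$. Their displacements are chosen so that
\[
\int_s^{s+\tau_0}(\gamma_+-\gamma_-)\cdot e=\pi .
\]
Concretely, take $\gamma_\pm=v\pm M\phi(t)e$ with $\phi$ a bump vanishing at the endpoints, so $M$ depends only on $\tau_0$.

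Using Harnack chains for the parabolic velocity equation, as in Krylov--Safonov applied to $P$, together with the Dirichlet kernel $P^{D_\pm}$ on thin tubes $D_\pm$ of width $\delta$ around $\gamma_\pm$, we get: a fraction $m(\delta,\tau_0,\kappa)>0$ of the mass stays in each tube during $[s,s+\tau_0/2]$. Inside the tube, the displacement $\int w\cdot e$ differs from that of $\gamma_\pm$ by at most $\delta\tau_0$. To make both tubes start at the same velocity, we first let the process run for a short time $\tau_1$ in a ball. The Harnack inequality then gives a density bounded below by $m_1$ on a ball around $v$, and we launch both tubes from there.

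In the final segment the two tubes merge into $B$. Harnack there gives a common minorant: a positive measure $\mu\le\min\{P^{D_+}(\cdot),P^{D_-}(\cdot)\}$ of mass $\ge m_2$. We split the full kernel as
\[
K=\tfrac12(K^{+}+K^{-})+\text{rest},
\]
where $K^\pm$ is the part of the tube contribution whose velocity marginal is $\mu$, and the rest is a positive measure of mass $1-m_2$. The Fourier transforms of $K^{+}$ and $K^{-}$ carry phases $e^{-i\phi_\pm}$ with $|\phi_+-\phi_--\pi|\le 2\delta\tau_0$. Therefore
\[
\tfrac12\big|\widehat K^{+}+\widehat K^{-}\big|\le \delta\tau_0\,\mu ,
\]
and the total variation is at most $1-m_2(1-C\delta)$. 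Choosing $\delta$ small, and then reading off $m_2$, gives $\theta>0$.

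\emph{Main obstacle.} The delicate step is the construction of the common minorant while keeping phase control. The Harnack minorant must be taken for the Dirichlet marginals $P^{D_\pm}$ conditioned on staying in the tubes. The lower bound for $P^{D}$ then requires a Harnack chain inside a moving tube, for a parabolic equation in moving coordinates $w-\gamma(t)$. This picks up a drift $-\dot\gamma\cdot\nabla_w$ that is bounded in terms of $M$ and hence controlled. The phase must also be measured per path, not per endpoint. I would handle this by working with the joint law of $(x,w)$ restricted to tube-paths, so that the phase spread is controlled by the tube width. This is the step I would spend most effort on, and I would first try to treat the drift by a Girsanov-free argument, namely Krylov--Safonov with bounded first-order terms.
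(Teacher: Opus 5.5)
Your proposal is correct and follows the paper's route. You normalise $\lambda$, rescale to unit frequency, iterate a uniform one-step contraction by submultiplicativity, and get the contraction from two Dirichlet tubes whose central displacements differ by $\pi$, a parabolic Harnack common minorant $\Theta$ for the velocity marginals (available because $A$ is independent of $x$), trimming by $\frac{\dd\Theta}{\dd\nu_i}(w)$, which depends only on terminal velocity and so keeps the phase windows, and cancellation. The paper settles the step you flag as delicate by taking tube width $\eta<\rho$ and continuing both families for the short time $4\rho^2$ in the Dirichlet ball $B_{4\rho}(v)$, so that Harnack applies from every tube endpoint in $B_\rho(v)$ while the extra phase spread is only $16\rho^3$; it bounds tube mass from below with the explicit barrier $(1-\abs{w-\gamma(t)}^2/\eta^2)_+^N$ rather than Harnack chains with drift, and needs no preliminary spreading step, since both tubes start from the Dirac mass at their common centre $v$.
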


We begin with a wave of unit frequency. The key is to find a fixed time after which its total variation has decreased by a fixed factor, uniformly over the starting velocity. Once this is known, scaling gives the corresponding time for any non-zero frequency. Repeating the decrease on successive time intervals then gives exponential decay.
\begin{proposition}
  \label{p:block}
  There are $L>0$ and $\delta\in(0,1)$, depending only on $n,\lambda,\Lambda$, such that, whenever the kernel is defined on $[s,s+L]$,
  \begin{equation}
    \label{e:block}
    \sup_{v\in\R^n,\ \abs e=1}
    \norm{\widehat K_{s,s+L}^e(v,\cdot)}_{\TV}\leq1-\delta .
  \end{equation}
\end{proposition}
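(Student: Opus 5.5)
The plan is to run the two-path cancellation described in Subsection~\ref{ss:strategy} at unit frequency $e$. By Galilean invariance of the class $A(t,v)$ I can assume the starting velocity is $v$ and shift so that everything is measured relative to the straight motion. The total variation $\norm{\widehat K_{s,s+L}^e(v,\cdot)}_{\TV}$ is non-increasing under composition, since each later kernel has mass at most one. It therefore suffices to exhibit, for a fixed time $L$, a loss of mass of order $\delta$ in the modulus of the Fourier measure.

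\textbf{Step 1 (two families of paths).} Fix a time $\tau$, to be chosen in terms of $\lambda,\Lambda$. The kernel starting from $v$ first runs for a time $\tau_0$. By the parabolic Harnack inequality applied to $P_{s,s+\tau_0}(v,\cdot)$, after this time a mass at least $\theta>0$ sits in the ball $B_1(v)$ with density bounded below there. By Harnack again it has density $\geq\theta$ on two small balls $B_\rho(v\pm\frac{\pi}{2\tau}e')$, where $e'$ is a unit vector and $\frac{\pi}{\tau}\sim$ separation; the size of $\tau$ is scaled to $\lambda$. Restrict to the Dirichlet kernels $K^{D_\pm}$ on two tubes $D_\pm=B_\rho(v\pm\frac{\pi}{2\tau}e)$ over a further time interval of length $\tau$. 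Positions accumulate displacement $\int w\,dt$, so on $D_\pm$ the displacements in the direction $e$ differ by $\pi+O(\rho\tau)$. The corresponding phases $e^{-ie\cdot x}$ are therefore nearly opposite, up to a phase error $\epsilon=O(\rho\tau)$ that is made small by choosing $\rho$ small.

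\textbf{Step 2 (common minorant).} After the tube phase, let both sub-families evolve for one more unit of time in a fixed ball $B$ containing both tubes. Each velocity marginal starts from a distribution of mass $\gtrsim\theta\rho^n$ on its tube. By the parabolic Harnack inequality for the Dirichlet marginal $P^B$, both marginals at the final time dominate a common measure $\mu=c_H\theta\rho^n\,\mathbf 1_{B'}\dd w$ on an interior ball $B'$. Writing the two contributions to $\widehat K^e$ as $e^{i\phi_+}(\mu+\nu_+)$ and $e^{i\phi_-}(\mu+\nu_-)$, with $\nu_\pm\geq0$, I then argue as follows. The phase carried by each contribution is only determined up to the further spread of positions during the last unit of time. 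However, that spread depends on the later path and is common to both families, because the second stage starts from the same velocity region. So I apply the minorant at the level of the joint kernel: the later evolution from $B'$ is shared. I expect making this precise to be the main obstacle. I would handle it by doing the Harnack minorization on the velocity marginal at the end of the tube stage, with a time shift of order one, and absorbing the subsequent common evolution into a contraction.

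\textbf{Step 3 (conclusion).} The triangle inequality gives
\[
\bigl|e^{i\phi_+}\mu+e^{i\phi_-}\mu\bigr|\leq|1+e^{i(\phi_--\phi_+)}|\,\mu\leq 2\epsilon\,\mu .
\]
Hence the total variation is at most $1-2(1-\epsilon)\mu(B')=:1-\delta$. This bound is uniform in $v$ and $e$ by Galilean and rotational invariance of the class of coefficients. The constants come only from Harnack, so they depend on $n,\lambda,\Lambda$, and $L=\tau_0+\tau+1$.
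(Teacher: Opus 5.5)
There is a genuine gap: your construction controls mass but not Fourier phases.

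\textbf{Step 1.} You first let the kernel evolve freely for time $\tau_0$, then use Harnack on the velocity marginal $P_{s,s+\tau_0}(v,\cdot)$ to place mass in $D_\pm$. A lower bound on a velocity marginal carries no information about position. With your unit-order choices of $\tau_0$ and of the separation $\frac{\pi}{\tau}$, the part of $K_{s,s+\tau_0}$ with velocity in $D_+$ has $e\cdot x$ spread over a range comparable to the wavelength $2\pi$. It is also typically displaced towards $+e$ relative to the part arriving in $D_-$, by a comparable amount. The tube stage then adds displacements differing by $\pi+O(\rho\tau)$ to two families whose initial phase distributions are unrelated, so nothing forces their Fourier contributions to cancel. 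Your error $\epsilon=O(\rho\tau)$ accounts only for the tube stage.

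\textbf{Step 2.} The same defect recurs. The two families enter the unit-time continuation from the disjoint balls $D_\pm$, inside a ball $B$ of unit size. The displacement added there is of order one, path dependent, and correlated with the entry ball, so it is not common to both families. A common minorant of the terminal velocity marginals alone does not align the phases. Your suggested fix, minorizing at the end of the tube stage, cannot work either: the two Dirichlet marginals are then supported in disjoint balls.

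\textbf{The minorant itself.} For measurable coefficients, Harnack acts in the starting variable, as in \eqref{e:parabolic-minoration}. It does not give a Lebesgue lower bound $c\,\mathbf 1_{B'}\dd w$ on terminal densities with constants depending only on $n,\lambda,\Lambda$. The minorant must therefore be a measure such as $P^D_{T-\rho^2,T}(v,\cdot)$.

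\textbf{The missing idea.} You need to keep the phases confined from the Dirac mass onward, and make both families end at the same velocity. In the paper both families start at $(0,v)$ at time $s$ and are followed by the barrier along moving tubes of radius $\eta$ around $\gamma_1\equiv v$ and $\gamma_2=v+\ell e$, with the excursion returning to $v$. The barrier gives both the mass bound \eqref{e:path-mass} and the support bound \eqref{e:path-phase}, so $\abs{e\cdot x-a_i}\leq\eta L_0$ with $a_2-a_1=\pi$. Because both tubes end near $v$, the common continuation needs only time $4\rho^2$ in $B_{4\rho}(v)$. This adds to both families a displacement within $16\rho^3$ of the same value $4\rho^2e\cdot v$. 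Harnack then yields the common minorant $\Theta$ of \eqref{e:common-average}. Trimming $F_i=\frac{\dd\Theta}{\dd\nu_i}E_i$ gives two measures with the same velocity marginal $\Theta$ and phases within $\varepsilon<\frac14$ of values differing by $\pi$, whence $\norm{\widehat F_i-e^{-i\Phi_i}\Theta}_{\TV}\leq\varepsilon\Theta(\R^n)$. This trimming is also what your Step 3 needs to be rigorous: as written, it treats each contribution as carrying a single phase $e^{i\phi_\pm}$, which is not the case.
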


We first derive the decay estimate from a contraction at unit frequency, then construct that contraction.

\subsection{Scaling to unit frequency}

To deduce Proposition~\ref{p:decay}, we first reduce to lower ellipticity bound one. Under $x=\sqrt\lambda X$ and $v=\sqrt\lambda V$, the matrix becomes $\widetilde A(t,V)=\lambda^{-1}A(t,\sqrt\lambda V)$, with ellipticity constants $1$ and $\frac{\Lambda}{\lambda}$, while the Fourier frequency becomes $\widetilde\xi=\sqrt\lambda\xi$. Rescaling the terminal velocity preserves total variation. Hence a decay rate $\abs{\widetilde\xi}^{\frac{2}{3}}$ gives the rate $\lambda^{\frac{1}{3}}\abs\xi^{\frac{2}{3}}$ in the original variables, with constants depending only on $n,\frac{\Lambda}{\lambda}$. In this and the next subsection we use the normalised variables and omit tildes.

At frequency $\xi$, a spatial displacement of order $\abs\xi^{-1}$ changes the phase by order one. Diffusion changes velocity by order $h^{\frac{1}{2}}$ over a time interval of length $h$, so transport produces a relative displacement of order $h^{\frac{3}{2}}$. The corresponding time scale is therefore $h=\abs\xi^{-\frac{2}{3}}$.

To make this precise, we fix $s_0,v_0$ and $\xi\ne0$, and set $r=\abs\xi^{-\frac{1}{3}}$ and $e=r^3\xi$. Under
\[
  t=s_0+r^2\tau,\qquad v=v_0+rV,\qquad
  x=r^2\tau v_0+r^3X,
\]
$\mathcal K$ becomes $r^{-2}\bigl(\partial_\tau+V\cdot\nabla_X+\mbox{Tr}(A_rD_V^2)\bigr)$, where
$A_r(\tau,V)=A(s_0+r^2\tau,v_0+rV)$ has the same ellipticity constants.  The shift in $x$ only multiplies the Fourier kernel by a number of modulus one, and rescaling the terminal velocity preserves total variation. Moreover, $r^3\xi=e$ and $\abs e=1$.  Thus~\eqref{e:block} gives, on every admissible interval,
\begin{equation}
  \label{e:scaled-block}
  \sup_v\norm{\widehat K_{s,s+L\abs\xi^{-\frac{2}{3}}}^\xi(v,\cdot)}_{\TV}
  \leq1-\delta .
\end{equation}

\subsection{Iterating the contraction}

For $s<r<t$, composition and~\eqref{e:fourier-kernel} give
\[
  \widehat K_{s,t}^\xi(v,E)
  =\int_{\R^n}\widehat K_{r,t}^\xi(v',E)\widehat K_{s,r}^\xi(v,\dd v'),
\]
hence
\[
  \norm{\widehat K_{s,t}^\xi(v,\cdot)}_{\TV}
  \leq\norm{\widehat K_{s,r}^\xi(v,\cdot)}_{\TV}
  \sup_{v'}\norm{\widehat K_{r,t}^\xi(v',\cdot)}_{\TV}.
\]
Iterating~\eqref{e:scaled-block} over
$N=\lfloor\frac{(t-s)\abs\xi^{\frac{2}{3}}}{L}\rfloor$ intervals and using the bound $1$ on the remainder yields
\begin{equation}
  \label{e:iteration}
  \norm{\widehat K_{s,t}^\xi(v,\cdot)}_{\TV}
  \leq(1-\delta)^N    \leq(1-\delta)^{-1} e^{-\frac{-\log(1-\delta)}{L}(t-s)\abs\xi^{\frac{2}{3}}}.
\end{equation}
For $\xi=0$ the left-hand side is at most one. The constants $L$ and $\delta$ depend only on $n,\frac{\Lambda}{\lambda}$ in the normalised variables; the number $N$ also depends on elapsed time and frequency. Undoing the ellipticity normalisation restores the factor $\lambda^{\frac{1}{3}}$. Thus Proposition~\ref{p:block} implies Proposition~\ref{p:decay}, and it remains to prove Proposition~\ref{p:block}.

\subsection{Mass in a moving tube}

We first show that a positive amount of mass can follow a prescribed velocity path. We allow velocities to stay within a narrow tube around the path, and discard mass when it leaves the tube. The width of the tube leaves room for diffusion, while keeping the accumulated spatial displacement close to the integral of the prescribed path. A barrier gives a lower bound for the mass that remains.

We take $\gamma:[s,s+T]\to\R^n$ piecewise $C^1$, with $\abs{\gamma'}\leq H$, and denote by $K^{\gamma,\eta}$ the Dirichlet fundamental solution with zero boundary values on $\abs{w-\gamma(t)}=\eta$.  For an integer $N=N(n,\lambda,\Lambda)$ large enough,
\[
    \psi_t(w)=\left(1-\frac{\abs{w-\gamma(t)}^2}{\eta^2}\right)_+^N
\]
satisfies, on each $C^1$ piece of $\gamma$,
\[
    \bigl(\partial_t+\mbox{Tr}(A(t,w)D_w^2)\bigr)\psi_t
    \geq-C(\eta^{-2}+H^2)\psi_t.
\]
The maximum principle applied to the corresponding exponential multiple of $\psi$ gives
\[
    \int_{\R^{2n}}K^{\gamma,\eta}_{s,s+T}(x,\gamma(s),w)\dd x\dd w
    \geq e^{-C(\eta^{-2}+H^2)T}.
    \refstepcounter{equation}\tag{\theequation}
    \label{e:path-mass}
\]
Moreover, for every unit vector $e$, the support of $K^{\gamma,\eta}_{s,s+T}(\cdot,\gamma(s),\cdot)$ is contained in
\[
    \left\{(x,w):
    \left|e\cdot x-\int_s^{s+T}e\cdot\gamma(t)\dd t\right|\leq\eta T,
    \ \abs{w-\gamma(s+T)}\leq\eta\right\}.
    \refstepcounter{equation}\tag{\theequation}
    \label{e:path-phase}
\]

\begin{figure}[t]
  \centering
\begin{tikzpicture}[
    >={Stealth[length=2.2mm]},
    lab/.style={font=\small},
    note/.style={font=\footnotesize},
  ]
  \def\tw{0.07}
  \def\Lz{4.1416}
  \def\Cz{0.55}
  \def\rr{0.45}
  \begin{scope}[x=2.25cm,y=1.9cm]
    \fill[dftwo!10] (0,0) -- (1,1) -- (3.1416,1) -- (\Lz,0) -- cycle;
    \fill[dfone!22] (0,-\tw) rectangle (\Lz,\tw);
    \fill[dftwo!28] (0,\tw) -- (1,1+\tw) -- (3.1416,1+\tw) -- (\Lz,\tw)
                -- (\Lz,-\tw) -- (3.1416,1-\tw) -- (1,1-\tw) -- (0,-\tw) -- cycle;
    \filldraw[fill=dftheta!12,draw=dftheta,thin] (\Lz,-\rr) rectangle (\Lz+\Cz,\rr);
    \draw[dfone,thin,dashed] (0,0) -- (\Lz,0);
    \draw[dftwo,thin,dashed] (0,0) -- (1,1) -- (3.1416,1) -- (\Lz,0);
    \foreach \j in {0,1}{
      \draw[dfone,line width=0.35pt] plot file {\figdata f1_path1_\j.dat};
      \draw[dftwo,line width=0.35pt] plot file {\figdata f1_path2_\j.dat};
    }
    \draw[->] (-0.06,-0.62) -- (\Lz+\Cz+0.25,-0.62) node[right,lab] {$t$};
    \draw[->] (-0.06,-0.62) -- (-0.06,1.32) node[left,lab] {$e\cdot w$};
    \foreach \x/\l in {0/s, 1/s+1, 3.1416/s+\pi, \Lz/s+L_0, \Lz+\Cz/T}
      \draw (\x,-0.62) -- ++(0,-0.04) node[below,note] {$\l$};
    \foreach \y/\l in {0/e\cdot v, 1/e\cdot v+1}
      \draw (-0.06,\y) -- ++(-0.035,0) node[left,note] {$\l$};
    \node[lab,dftwo,above] at (2.07,1.08) {$\gamma_2(t)=v+\ell(t-s)\,e$};
    \node[lab,dfone,below] at (2.07,-0.09) {$\gamma_1(t)=v$};
    \node[note,align=center] at (2.07,0.5) {area $=a_2-a_1=\pi$};
    \draw[note,<-] (3.62,0.52+\tw) -- (3.85,0.9)
      node[above,align=center,inner sep=1pt] {tube\\ $\abs{w-\gamma_2(t)}<\eta$};
    \node[note,dftheta,align=center,above] at (\Lz+\Cz/2,\rr)
      {$D=B_{4\rho}(v)$\\ time $4\rho^2$};
    \node[lab] at (-0.62,1.32) {(a)};
  \end{scope}

  \begin{scope}[shift={(1.0cm,-4.75cm)},x=1.15cm,y=1.45cm]
    \def\ep{0.42}
    \def\Pa{0}
    \def\Pb{3.1416}
    \draw[thin,domain=-1.35:4.5,samples=90,smooth] plot (\x,{1.62+0.28*cos(\x r)});
    \node[note,right] at (4.5,1.62) {$\cos(e\cdot x-\Phi_1)$};
    \fill[dfone] (\Pa,1.9) circle (1.3pt);
    \fill[dftwo] (\Pb,1.34) circle (1.3pt);
    \filldraw[fill=dfone!8,draw=dfone!60,thin] (\Pa-\ep,-1) rectangle (\Pa+\ep,1);
    \filldraw[fill=dftwo!8,draw=dftwo!60,thin] (\Pb-\ep,-1) rectangle (\Pb+\ep,1);
    \foreach \x in {\Pa,\Pb} \draw[dfgrey,densely dotted] (\x,-1) -- (\x,1.9);
    \pgfmathsetseed{11}
    \foreach \w/\th/\ea/\eb in {-0.857/1/0/1, -0.571/2/1/1, -0.286/3/0/2, 0/3/1/0,
                                 0.286/3/2/0, 0.571/2/1/1, 0.857/1/1/0}{
      \foreach \P/\c/\ex in {\Pa/dfone/\ea, \Pb/dftwo/\eb}{
        \foreach \k in {1,...,\th}{
          \pgfmathsetmacro\dx{0.85*\ep*rand}
          \fill[\c] (\P+\dx,\w) circle (1.35pt);
        }
        \ifnum\ex>0
          \foreach \k in {1,...,\ex}{
            \pgfmathsetmacro\dx{0.85*\ep*rand}
            \draw[\c,thin,fill=white] (\P+\dx,\w) circle (1.35pt);
          }
        \fi
      }
    }
    \draw[dftheta,thick,dashed] (\Pa+\ep,0) -- (\Pb-\ep,0);
    \node[note,dftheta,align=center,above] at (1.5708,0.02)
      {equal retained\\ mass at each $w$};
    \draw[->] (-1.35,-1.18) -- (4.55,-1.18) node[below left,lab] {$e\cdot x$};
    \draw (\Pa,-1.18) -- ++(0,-0.05) node[below,note] {$\Phi_1$};
    \draw (\Pb,-1.18) -- ++(0,-0.05) node[below,note] {$\Phi_2=\Phi_1+\pi$};
    \draw[->] (-1.35,-1.18) -- (-1.35,1.25) node[above,lab] {$w$};
    \foreach \y/\l in {-1/v-4\rho, 0/v, 1/v+4\rho}
      \draw (-1.35,\y) -- ++(-0.05,0) node[left,note] {$\l$};
    \draw[decorate,decoration={brace,amplitude=3pt},thin]
      (\Pb-\ep,1.04) -- (\Pb+\ep,1.04) node[midway,above=2pt,note] {$2\varepsilon$};
    \node[lab,dfone,left] at (\Pa-\ep,0.85) {$F_1$};
    \node[lab,dftwo,left] at (\Pb-\ep,0.85) {$F_2$};
    \begin{scope}[shift={(-1.2,-1.75)}]
      \fill[dfgrey] (0,0) circle (1.35pt);
      \node[note,right] at (0.05,0) {retained $F_i=g_iE_i$, \ $g_i=\frac{\dd\Theta}{\dd\nu_i}$};
      \draw[dfgrey,fill=white] (3.9,0) circle (1.35pt);
      \node[note,right] at (3.95,0) {trimmed $E_i-F_i$;\ \ rows of $w$: $n=1$};
    \end{scope}
    \node[lab] at (-2.05,1.62) {(b)};
  \end{scope}

  \begin{scope}[shift={(9.5cm,-4.75cm)},x=0.42cm,y=1.45cm]
    \foreach \w/\th/\ea/\eb in {-0.857/1/0/1, -0.571/2/1/1, -0.286/3/0/2, 0/3/1/0,
                                 0.286/3/2/0, 0.571/2/1/1, 0.857/1/1/0}{
      \fill[dftheta!45] (0,\w+0.01) rectangle (\th,\w+0.115);
      \fill[dftheta!45] (0,\w-0.115) rectangle (\th,\w-0.01);
      \draw[dfone,thin] (0,\w+0.01) rectangle (\th+\ea,\w+0.115);
      \draw[dftwo,thin] (0,\w-0.115) rectangle (\th+\eb,\w-0.01);
    }
    \draw[->] (0,-1.18) -- (0,1.25);
    \draw[->] (0,-1.18) -- (6.2,-1.18) node[below,note] {mass};
    \node[note,align=left,right] at (-0.2,-1.75) {upper bars \textcolor{dfone}{$\nu_1$},
      lower \textcolor{dftwo}{$\nu_2$};\\ \textcolor{dftheta}{filled: $\Theta\le\nu_1,\ \Theta\le\nu_2$}};
    \node[note,align=center] at (3.1,1.62) {velocity marginals\\ at time $T$};
    \node[lab] at (-1.3,1.62) {(c)};
  \end{scope}
\end{tikzpicture}
  \caption{\small{Döblin--Fourier cancellation at unit frequency $e$. \textbf{(a)} The paths $\gamma_1,\gamma_2$ with the tubes in which mass is retained, then the common continuation in $D=B_{4\rho}(v)$. Since $e\cdot x=\int e\cdot w\dd t$, the central displacements differ by the shaded area, $a_2-a_1=\pi$. \textbf{(b)} At time $T$ the retained mass lies in the windows $\abs{e\cdot x-\Phi_i}\leq\varepsilon$, at a crest and a trough of $\cos(e\cdot x-\Phi_1)$; rows of terminal velocity $w$ are drawn for $n=1$. Trimming $E_i$ (all dots) to $F_i=g_iE_i$ (filled dots) leaves equal mass $\Theta(\dd w)$ of both families at each $w$. The central paths give exactly opposite phase factors $e^{-i\Phi_1}$ and $e^{-i\Phi_2}=-e^{-i\Phi_1}$; the phase factors of the retained mass differ from these central factors $e^{-i\Phi_i}$ by at most $\varepsilon$. \textbf{(c)} The velocity marginals $\nu_i\geq\Theta$. Widths are not to scale.}}
  \label{fig:doeblin-fourier}
\end{figure}

\subsection{Opposite phases}

One prescribed curve stays at the initial velocity; the other makes an excursion in the direction of the frequency and returns. These curves specify the centres of the two tubes; see Figure~\ref{fig:doeblin-fourier}(a). We choose the excursion so that the two accumulated displacements differ by $\pi$ in that direction.

We fix $s,v$ and a unit vector $e$, and put $L_0=1+\pi$ and
\[
    \ell(t)=\min\{t,1,L_0-t\},\qquad
    \gamma_1(s+t)=v,\qquad \gamma_2(s+t)=v+\ell(t)e.
\]
Then $\gamma_i(s)=\gamma_i(s+L_0)=v$, $\abs{\gamma_i'}\leq1$, and
\[
    a_i:=\int_s^{s+L_0}e\cdot\gamma_i(t)\dd t,
    \qquad a_2-a_1=\int_0^{L_0}\ell(t)\dd t=\pi.
    \refstepcounter{equation}\tag{\theequation}
    \label{e:opposite-paths}
\]
We choose fixed $0<\rho<128^{-\frac{1}{3}}$ and
$0<\eta<\min\{\rho,(8L_0)^{-1}\}$.  The Dirichlet fundamental solutions along $\gamma_i$ give two positive measures $R_i$:
\[
    R_i(\dd x\dd w)=K^{\gamma_i,\eta}_{s,s+L_0}(x,v,w)\dd x\dd w.
\]
By~\eqref{e:path-mass}, their masses $m_i$ satisfy
\[
    m_i\geq m_0:=e^{-C(\eta^{-2}+1)L_0}>0.
\]
They are dominated by the full kernel at time $s+L_0$; consequently
\[
    \widetilde R_i:=\frac{m_0}{2m_i}R_i,
    \qquad \widetilde R_1+\widetilde R_2
    \leq K_{s,s+L_0}(x,v,w)\dd x\dd w.
    \refstepcounter{equation}\tag{\theequation}
    \label{e:two-parts}
\]
The factor $\frac{1}{2}$ prevents double counting even if the two measures overlap. Each $\widetilde R_i$ has mass $\frac{m_0}{2}$; by~\eqref{e:path-phase}, its support satisfies $\abs{w-v}\leq\eta$ and $\abs{e\cdot x-a_i}\leq\eta L_0$.

\subsection{A common minorant}

The two tubes end in a common velocity ball. The distributions within that ball may differ, so we let both measures evolve a little longer there. The parabolic Harnack inequality then gives a common minorant for their velocity marginals. The extra time is short enough that the phases remain nearly opposite.

We set $D=B_{4\rho}(v)$ and continue both $\widetilde R_i$ up to $T=s+L_0+4\rho^2$ using $K^D$. We denote the resulting measures by $E_i$. Positivity and composition give
\[
    E_1+E_2\leq M,
    \qquad M(\dd x\dd w):=K_{s,T}(x,v,w)\dd x\dd w.
    \refstepcounter{equation}\tag{\theequation}
    \label{e:continued-parts}
\]
For each non-negative terminal test function $\varphi$, the function
\[
    (\tau,v')\longmapsto\int\varphi(w)P^D_{T-\tau,T}(v',\dd w)
\]
solves the forward non-divergence parabolic equation with diffusion matrix $A(T-\tau,v')$ in $D$. We apply the parabolic Harnack inequality to this non-negative function, comparing $\tau=\rho^2$ with $\tau=4\rho^2$. Together with~\eqref{e:path-mass} for the constant path, this gives $h,q_0>0$, depending only on $n,\lambda,\Lambda$, such that
\[
    \begin{aligned}
    &P^D_{T-4\rho^2,T}(v',\cdot)
    \geq hP^D_{T-\rho^2,T}(v,\cdot), \qquad(\abs{v'-v}<\rho),\\
    &P^D_{T-\rho^2,T}(v,B_{4\rho}(v))\geq q_0.
    \end{aligned}
    \refstepcounter{equation}\tag{\theequation}
    \label{e:parabolic-minoration}
\]
By~\eqref{e:parabolic-minoration}, the velocity marginal $\nu_i$ of $E_i$ satisfies
\[
    \nu_i\geq\Theta:=\frac{hm_0}{2}P^D_{T-\rho^2,T}(v,\cdot),
    \qquad \Theta(\R^n)\geq\theta_0:=\frac{hm_0q_0}{2}>0.
    \refstepcounter{equation}\tag{\theequation}
    \label{e:common-average}
\]
We use only the portion of each measure supplied by the common lower bound $\Theta$. Since $\Theta\leq\nu_i$, the density $g_i=\frac{\dd\Theta}{\dd\nu_i}$ lies between zero and one. Thus $F_i(\dd x\dd w):=g_i(w)E_i(\dd x\dd w)$ satisfies $F_i\leq E_i$, and its velocity marginal is $\Theta$. In particular, $F_1+F_2\leq M$. Finally, we put
\[
    \Phi_i=a_i+4\rho^2e\cdot v,
    \qquad \varepsilon=\eta L_0+16\rho^3<\frac{1}{4}.
\]
On the last interval, of length $4\rho^2$, the Dirichlet fundamental solution is
confined to $B_{4\rho}(v)$.  The additional spatial displacement in
direction $e$ therefore lies within $4\rho\,4\rho^2=16\rho^3$ of
$4\rho^2e\cdot v$.
Together with~\eqref{e:path-phase}, this gives
\[
    \abs{e\cdot x-\Phi_i}\leq\varepsilon
    \quad\text{on the support of }F_i,
    \qquad \Phi_2-\Phi_1=\pi.
    \refstepcounter{equation}\tag{\theequation}
    \label{e:phase-windows}
\]

\subsection{The cancellation}

The portions $F_1$ and $F_2$ supplied by the common minorant have Fourier phases close to $e^{-i\Phi_1}$ and $e^{-i\Phi_2}$. Replacing the phases by these fixed values gives the opposite measures $e^{-i\Phi_1}\Theta$ and $e^{-i\Phi_2}\Theta$, whose sum is zero. We bound the phase error and the contribution of the mass outside $F_1+F_2$.

For a positive measure $F$ on $\R_x^n\times\R_w^n$, we write
$\widehat F(E)=\int_{\R^n\times E}e^{-ie\cdot x}F(\dd x\dd w)$.  Since the velocity marginal of $F_i$ is $\Theta$,~\eqref{e:phase-windows} gives
\[
    \norm{\widehat F_i-e^{-i\Phi_i}\Theta}_{\TV}
    \leq\varepsilon\Theta(\R^n).
\]
Here we used $\abs{e^{-ia}-e^{-ib}}\leq\abs{a-b}$.  Since
$F_1+F_2\leq M$, we may now write
\[
    \widehat M
    =(\widehat F_1-e^{-i\Phi_1}\Theta)
      +(\widehat F_2-e^{-i\Phi_2}\Theta) +(e^{-i\Phi_1}+e^{-i\Phi_2})\Theta
      +\widehat{(M-F_1-F_2)}.
\]
The first two terms are the phase errors just estimated. The third term vanishes because $\Phi_2-\Phi_1=\pi$. The last term comes from a positive measure of mass $1-2\Theta(\R^n)$, so $\norm{\widehat{(M-F_1-F_2)}}_{\TV}\leq1-2\Theta(\R^n)$. Therefore
\[
    \norm{\widehat K_{s,T}^e(v,\cdot)}_{\TV}
    =\norm{\widehat M}_{\TV}\leq
      2\varepsilon\Theta(\R^n)+1-2\Theta(\R^n)\leq1-2(1-\varepsilon)\theta_0=:1-\delta<1.
\]
This is~\eqref{e:block} with $L=L_0+4\rho^2$, proving Proposition~\ref{p:block}. Proposition~\ref{p:decay} then follows from~\eqref{e:iteration}.

\section{Kinetic Aleksandrov maximum principle}
\label{sec:amp}

We now prove the Aleksandrov maximum principle by estimating the solution produced by a source. This solution is obtained by integrating the source against the kernel, so we need an integrability bound for that kernel. Fourier decay gives control of its supremum in position. The parabolic Aleksandrov estimate gives control of its integral in position, as a function of time and velocity. Interpolating between these two bounds gives the required integrability in all variables. Since the parabolic estimate already involves a time integral, we combine the bounds on intervals $(T,2T)$ of time elapsed since the start. The restriction on the exponent comes from summing their contributions as $T$ tends to zero.

We continue with smooth $A=A(t,v)$ and work at radius $R=1$. The final change of scale and the approximation in Section~\ref{s:remarks} give the stated estimate.

\subsection{The velocity marginal}

Integrating the kernel in position removes the transport term and leaves the parabolic kernel $P$. We can therefore apply the classical parabolic Aleksandrov estimate. We fix a starting time and velocity $(s,v)$, and consider a time interval of length $0<S\leq1$, with $s+S\leq1$. The variable $\tau$ denotes the time elapsed since $s$. The marginal $P_{s,s+\tau}(v,w)$ equals the $L^1_x$ norm of $K_{s,s+\tau}(\cdot,v,w)$. As a function of $(\tau,w)$, it solves the adjoint parabolic equation. Integrating terminal data against it gives the solution of the backward non-divergence equation in the starting variables. The parabolic Aleksandrov estimate~\cite{Krylov1976,Tso1985}, followed by duality and interpolation with the mass bound, gives
\[
    \norm{P_{s,s+\tau}(v,w)}_
    {L^\gamma((0,S)\times\R_w^n)}
    \leq CS^{\beta_\gamma},
    \qquad
    \beta_\gamma:=\frac{n+2}{2\gamma}-\frac{n}{2},
    \quad1\leq\gamma\leq\frac{n+1}{n}.
    \refstepcounter{equation}\tag{\theequation}
    \label{e:zero-mode}
\]
To obtain this bound over the whole velocity space, we first apply the parabolic estimate in a large ball and control the contribution from its boundary. We take a non-negative smooth compactly supported source $F$ on $(s,s+S)\times\R^n$, and denote by $h$ the bounded solution of
\[
    \partial_rh+\mbox{Tr}(A(r,w)D_w^2h)=-F(r,w)
    \quad\hbox{in }(s,s+S)\times\R^n,
    \qquad h(s+S,\cdot)=0.
\]
With $M$ denoting the supremum of $h$ over $(s,s+S)\times\R^n$, the quadratic barrier in a ball of radius $R$ about any starting velocity bounds the contribution of the lateral boundary values by $\frac{2n\Lambda S M}{R^2}$. The Aleksandrov estimate on the bounded cylinder therefore gives
\[
    M\leq\frac{2n\Lambda S}{R^2}M
       +CR^{\frac{n}{n+1}}\norm{F}_{L^{n+1}}.
\]
We choose $R^2=4n\Lambda S$, so that the first term is $\frac{M}{2}$ and can be moved to the left-hand side. Duality then gives~\eqref{e:zero-mode} at $\gamma=\frac{n+1}{n}$. The time integral of the kernel has total mass $S$, which supplies the $L^1$ bound. Interpolating between these two bounds gives the remaining exponents.

\subsection{A bound in position}

We use the first part of the evolution to make the spatial waves decay, and the second part to apply the parabolic estimate. For $0<T\leq \frac{S}{2}$, we split the evolution at time $s+\frac{T}{2}$. For $T<\tau<2T$, composing the two kernels and using~\eqref{e:zero-mode} gives a density $\widehat K_{s,s+\tau}^\xi(v,w)$ satisfying
\[
    \norm{\widehat K_{s,s+\tau}^\xi(v,w)}_
    {L^\gamma((T,2T)\times\R_w^n)}
    \leq CT^{\beta_\gamma}e^{-cT\abs\xi^{\frac{2}{3}}},
    \qquad1\leq\gamma\leq\frac{n+1}{n}.
    \refstepcounter{equation}\tag{\theequation}
    \label{e:slab-mode}
\]
Indeed, Proposition~\ref{p:decay} bounds the total variation of the first kernel by $Ce^{-cT\abs\xi^{\frac{2}{3}}}$. The variation of the second kernel is bounded by its positive velocity marginal, to which~\eqref{e:zero-mode} applies for every intermediate velocity. Integrating this parabolic bound against the variation of the first kernel gives~\eqref{e:slab-mode}. Thus Fourier cancellation supplies the decay in $\xi$, while the positive marginal supplies the integrability in time and velocity.
Since
\[
  \int_{\R^n}e^{-cT\abs\xi^{\frac{2}{3}}}\dd\xi=CT^{-\frac{3n}{2}},
\]
Applying Fourier inversion and Minkowski's inequality to~\eqref{e:slab-mode} gives a bound for the supremum in position:
\[
  \norm{K_{s,s+\tau}(\cdot,v,\cdot)}_
  {L^\gamma((T,2T)\times\R_w^n;L^\infty(\R_x^n))}
  \leq CT^{\beta_\gamma-\frac{3n}{2}}.
    \refstepcounter{equation}\tag{\theequation}
    \label{e:Fourier-height}
\]

The factor $T^{-\frac{3n}{2}}$ is the inverse volume of a spatial ball of radius $T^{\frac{3}{2}}$, the distance over which diffusion and transport spread mass in time $T$.

\subsection{Spatial interpolation and time integration}

The preceding bounds control two different quantities: the supremum of the kernel in position and its integral in position. We combine them by bounding $q-1$ powers of the kernel by the supremum and integrating the remaining power. We set $\gamma_0:=\frac{n+1}{n}$. If $1<q<\gamma_0$, we choose $q_1$ by $\frac{q-1}{\gamma_0}+\frac{1}{q_1}=1$, so that $1\leq q_1\leq\gamma_0$. Positivity gives, for every $\tau,w$,
\[
    \int_{\R^n}K_{s,s+\tau}(x,v,w)^q\dd x
    \leq
    \left(\sup_xK_{s,s+\tau}(x,v,w)\right)^{q-1}
    P_{s,s+\tau}(v,w).
\]
Hölder's inequality,~\eqref{e:zero-mode} and~\eqref{e:Fourier-height}
therefore give
\[
    \norm{K_{s,s+\tau}(\cdot,v,\cdot)}_
    {L^q((T,2T)\times\R^{2n})}^q
    \leq CT^{1-2n(q-1)}.
    \refstepcounter{equation}\tag{\theequation}
    \label{e:slab-density}
\]
Here we used the identity
$(q-1)(\beta_{\gamma_0}-\frac{3n}{2})+\beta_{q_1}=1-2n(q-1)$.
To sum the elapsed-time intervals, we integrate~\eqref{e:slab-density} against $T^{-1} {\rm d} T$ for $0<T<\frac{S}{2}$ and add the estimate on $(\frac{S}{2},S)$. Each elapsed time below $\frac{S}{2}$ is counted with weight $\log2$. For $q>1$, the integral $\int_0^{\frac{S}{2}}T^{-2n(q-1)}\dd T$ is finite precisely when $q<1+\frac{1}{2n}$. In this range, we obtain
\[
    \norm{K_{s,s+\tau}(\cdot,v,\cdot)}_
    {L^q((0,S)\times\R^{2n})}
    \leq CS^{\frac{1-2n(q-1)}{q}}.
    \refstepcounter{equation}\tag{\theequation}
    \label{e:Green-density}
\]

With $q=\frac{p}{p-1}$, the preceding range for $q$ is equivalent to $p>2n+1$.
After the reflection $(t,x)\mapsto(-t,-x)$, we denote by $F$ the reflected source $f_+$, extended by zero outside the reflected cylinder, and by $s_*$ its terminal time. Integrating this source against the kernel gives
\[
    W(s,x_0,v):=\int_s^{s_*}\int_{\R^{2n}}
    F(t,x_0+x,w)K_{s,t}(x,v,w)\dd x\dd w\dd t.
\]
This is the non-negative solution of $\mathcal K W=-F$ with zero terminal data at $s_*$. Subtracting $W$ from the reflected sub-solution leaves a homogeneous differential inequality. Since $W\geq0$, this subtraction does not increase the boundary values. The maximum principle therefore bounds the difference by the positive boundary supremum. Hölder's inequality and~\eqref{e:Green-density} bound $W$ in terms of the source norm. Returning to the original variables gives, for $P\in Q_1^-(P_0)$,
\[
    u(P)\leq\sup_{\partial_kQ_1^-(P_0)}u_+
    +C\norm{f_+}_{L^p(Q_1^-(P_0))}.
\]
This proves~\eqref{eq:abp-kin} at radius one. On $Q_R^-(P_0)$, rescaling multiplies the source by $R^2$ and the volume by $R^{4n+2}$, giving the source factor $R^{2-\frac{4n+2}{p}}$.

\section{Spatial regularity}
\label{sec:space}

For $A=A(t,v)$, every bounded classical solution of $\mathcal Lu=0$ is $C^\infty$ in the spatial variable $x$ in the interior. The local Fourier decay is integrable after multiplication by any power of $\abs\xi$, so we can differentiate the representation in $x$ arbitrarily many times, without differentiating $A$. To obtain this decay, we account for the velocity boundary: little mass reaches it in a short time, while mass remaining inside for longer undergoes Fourier cancellation. This gives the derivative bounds in~\eqref{e:x-gradient}.

\subsection{Local cancellation}

We set $c_0=\frac{1}{8}$ and work with the Dirichlet kernel $K^D$ for $D=B_R(v_0)$. We consider a solution $U$ of $\mathcal K U=0$ on
\[
    (s,s+c_0R^2)\times\R_x^n\times D,
\]
with prescribed terminal and velocity-boundary data $g$. Its value at $(s,x,v)$ is obtained by integrating these data against a non-negative measure $\omega^D_{s,v}$:
\[
    U(s,x,v)=\int g\bigl(r,x+(r-s)v_0+y,w\bigr)
    \omega^D_{s,v}(\dd r\dd y\dd w).
\]
Here $y=x'-x-(r-s)v_0$ measures the spatial displacement relative to motion at velocity $v_0$. The integration is over $y\in\R^n$ and
\[
    (r,w)\in\bigl(\{s+c_0R^2\}\times D\bigr)
    \cup\bigl((s,s+c_0R^2]\times\partial D\bigr).
\]
The two sets correspond to reaching the terminal time while still inside $D$, or reaching the velocity boundary by that time. Thus the formula includes all the prescribed data. The measure has total mass one, since constant data $g=1$ give the constant solution $U=1$. It is independent of the starting position $x$ because the coefficients are independent of position. We denote its Fourier transform in $y$ by $\widehat\omega_{s,v}^{D,\xi}$. Subtracting the displacement $(r-s)v_0$ only multiplies the Fourier measure by a phase of modulus one, depending on $r$, and hence leaves its total variation unchanged.

This boundary measure can be constructed from $K^D$ itself. For a bounded source $F$, we denote by $W_F^D$ the solution of $\mathcal K W_F^D=-F$ with zero terminal and velocity-boundary data, obtained by integrating $F$ against $K^D$ over later times. If $\phi$ is a smooth extension of the prescribed boundary data, the solution of $\mathcal K U=0$ with those data is $\phi+W_{\mathcal K\phi}^D$. The bound
\[
|W_F^D(r,x,v)|\leq\norm F_\infty
\min\left\{s+c_0R^2-r,\frac{R^2-|v-v_0|^2}{2n\lambda}\right\}
\]
follows by comparison with two barriers: the time remaining until the terminal face, and the quadratic function of velocity in the display. These show that $W_F^D$ tends to zero at the terminal face and the velocity boundary. Comparison and the Riesz representation theorem then give the non-negative measure representing the boundary data.

The Dirichlet kernel $K^D$, started at $(s,x,v)$, is supported where $\abs{x'-x-(r-s)v_0}\leq R(r-s)$. Indeed, for each unit vector $e$, the affine functions $\pm e\cdot[x'-x-(r-s)v_0]-R(r-s)$ vanish at the starting point and have transport derivative $\pm e\cdot(w-v_0)-R\leq0$ in $D$. Composing with a non-negative increasing smooth function that vanishes on $(-\infty,0]$ and applying comparison gives the support bound. To verify that the boundary measure has mass one, we choose a position cutoff equal to one near this support for $s\leq r\leq s+c_0R^2$. All derivatives of the cutoff lie outside the support of the space-time measure. The source integral is therefore zero, and the boundary integral is one.

\begin{proposition}[Localised Döblin--Fourier cancellation estimate]
\label{p:decay-loc}
There are $C,c>0$, depending only on $n,\lambda,\Lambda$, such that, if
$0\leq s<s+c_0R^2\leq1$, then
\[
    \sup_{v\in B_{\frac{3R}{4}}(v_0)}
    \norm{\widehat\omega_{s,v}^{D,\xi}}_{\TV}
    \leq C\exp\left(-c(R^3\abs\xi)^{\frac{1}{3}}\right),
    \qquad\xi\in\R^n.
    \refstepcounter{equation}\tag{\theequation}
    \label{e:local-decay}
\]
\end{proposition}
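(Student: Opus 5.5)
The plan is to balance two effects: the unit-scale contraction of Proposition~\ref{p:block}, and a Gaussian-type bound for the probability that velocity travels a distance of order $R$ in a short time. Large-scale Fourier cancellation alone cannot work here, because an $O(1)$ fraction of the mass hits $\partial D$ before time $s+c_0R^2$. Instead I fix an intermediate time $t\in(0,c_0R^2]$ and split $\omega^D_{s,v}$ according to whether the path hits the sphere $\abs{w-v_0}=\frac{13R}{16}$ before time $s+t$.

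\emph{Case split and stopping.} If $R^3\abs\xi\leq C_1$, the estimate is trivial from $\norm{\widehat\omega^{D,\xi}_{s,v}}_{\TV}\leq1$, so I assume $R^3\abs\xi$ is large. Let $D'=B_{13R/16}(v_0)$. By the Markov (composition) property for the kernel killed on $\partial D'$, I write
\[
  \omega^D_{s,v}=\mu_1+\int\omega^D_{s+t,w}\,K^{D'}_{s,s+t}(v,\dd y\dd w)\quad(\text{suitably translated in }y).
\]
Here $\mu_1$ is the positive part carried by paths that reach $\partial D'$ before $s+t$, continued by $\omega^D$ from the hitting point. Since $\omega^D$ has mass one and translations in $y$ only produce unimodular phases, I get
\[
  \norm{\widehat\omega^{D,\xi}_{s,v}}_{\TV}\leq \mu_1(\text{total})+\norm{\widehat K^{D',\xi}_{s,s+t}(v,\cdot)}_{\TV}.
\]

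\emph{Exit bound.} For $\abs{v-v_0}<\frac{3R}{4}$, the hitting probability of $\partial D'$ within time $t$ is at most $Ce^{-cR^2/t}$. I would prove this by comparison with a barrier of the form $\exp\bigl(\alpha(e\cdot(w-v_0))-C\alpha^2\Lambda\tau\bigr)$, optimised in $\alpha\sim R/t$ and summed over finitely many directions $e$. Alternatively, one can chain $k\sim R^2/t$ short-time exit estimates, each of which fails with probability bounded away from one.

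\emph{Local contraction.} Set $r=\abs\xi^{-1/3}$, which is $\ll R$ in the nontrivial case. The construction of Proposition~\ref{p:block} (tubes $\gamma_1,\gamma_2$, the ball $B_{4\rho}(v)$, Harnack minorant, cancellation), rescaled as in \eqref{e:scaled-block}, only uses velocities within $Cr$ of the starting velocity, over a time block of length $Lr^2$. Hence it applies verbatim to $K^{D'}$ whenever the starting velocity lies in $B_{13R/16-Cr}$. From starting velocities outside this ball I use only the trivial bound, and mass that has reached there within time $t$ is already counted in $\mu_1$ (after slightly enlarging $D'$). Iterating over $N=\lfloor t/(Lr^2)\rfloor$ blocks as in \eqref{e:iteration} gives $\norm{\widehat K^{D',\xi}_{s,s+t}}_{\TV}\leq Ce^{-ct\abs\xi^{2/3}}$, up to the exit term.

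\emph{Optimisation.} Choosing $t=\min\{c_0R^2,\,R\abs\xi^{-1/3}\}$ makes $R^2/t$ and $t\abs\xi^{2/3}$ both comparable to $R\abs\xi^{1/3}=(R^3\abs\xi)^{1/3}$, which yields \eqref{e:local-decay}.

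The main obstacle I expect is the bookkeeping in the local contraction step. Blocks started near $\partial D'$ have no room for tubes, and the killed mass must be consistently charged either to the exit probability or to the contraction, without double counting. I would handle this by writing the iteration as a sub-Markov chain on $B_{13R/16-Cr}$, whose losses are all dominated by the single hitting probability of the sphere of radius $\frac{25R}{32}$.
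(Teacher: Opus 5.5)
Your proof is correct. It differs from the paper's at exactly the step you flagged as the main obstacle.

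\textbf{The paper's route.} The paper never localises the cancellation. It compares the killed kernel with the whole-space kernel. Since $0\leq K^D\leq K$, the difference $K_{s,s+h}-K^D_{s,s+h}$ is a positive measure whose mass is the exit probability $q(h,v)$. The same exponential barrier you propose, centred at the starting velocity and summed over the $\pm$ coordinate directions, bounds this by $Ce^{-cR^2/h}$. The triangle inequality and the global Proposition~\ref{p:decay} then give
\[
\norm{\widehat K^{D,\xi}_{s,s+h}(v,\cdot)}_{\TV}\leq C\bigl(e^{-cR^2/h}+e^{-ch\abs\xi^{2/3}}\bigr)
\]
in one line. The splitting of $\omega^D_{s,v}$ at time $s+h$ and the choice $h\sim R\abs\xi^{-1/3}$ then match your splitting and optimisation. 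The paper needs no intermediate sphere $\partial D'$, because no tube ever has to fit inside $D$.

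\textbf{Your route.} You rerun Proposition~\ref{p:block} inside $D'$. This is heavier, but it uses only the coefficient on $D'$ and never the whole-space kernel, so it needs no extension of $A$ outside $D$. It also shows that the contraction is itself local, which is what one would need where no global estimate is available.

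\textbf{The bookkeeping.} It is lighter than you fear. Write $\mu_k=\widehat K^{D',\xi}_{s,s+kLr^2}(v,\cdot)$ and $B''=B_{13R/16-Cr}(v_0)$. Use the block bound from starting velocities in $B''$, the trivial bound on $D'\setminus B''$, and $\abs{\mu_k}\leq P^{D'}_{s,s+kLr^2}(v,\cdot)$. This gives
\[
\norm{\mu_{k+1}}_{\TV}\leq(1-\delta)\norm{\mu_k}_{\TV}+\delta\,P^{D'}_{s,s+kLr^2}(v,D'\setminus B'').
\]
The last term is at most the probability of leaving $B_{25R/32}(v_0)$ before $s+t$, hence at most $Ce^{-cR^2/t}$. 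Summing the geometric series gives $\norm{\mu_N}_{\TV}\leq(1-\delta)^N+Ce^{-cR^2/t}$, which is exactly what your optimisation needs.
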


\begin{remark}
  The loss from $\frac{2}{3}$ to $\frac{1}{3}$ comes from the contribution of velocity-boundary data at arbitrarily short elapsed times.
\end{remark}

\begin{proof}
We first compare the evolution killed at the velocity boundary with the whole-space evolution. Their difference is the mass that has reached the boundary. For $v\in B_{\frac{3R}{4}}(v_0)$ and $0<h\leq c_0R^2$, we will show that
\[
    \norm{\widehat K^{D,\xi}_{s,s+h}(v,\cdot)}_{\TV}
    \leq C\left(e^{-\frac{cR^2}{h}}+e^{-ch\abs\xi^{\frac{2}{3}}}\right).
    \refstepcounter{equation}\tag{\theequation}
    \label{e:local-Dirichlet}
\]
The first term bounds the mass that has reached the boundary by time $s+h$; the second is the Fourier decay of the whole-space kernel. By scaling, it is enough to take $R=1$. We fix $t=s+h$ and, for $0\leq r\leq h$, write this lost mass as
\[
    q(r,v')=1-P^D_{t-r,t}(v',D).
\]
Then $q(0,\cdot)=0$ in $D$, $q=1$ on $\partial D$ for $r>0$, and
$\partial_rq-\mbox{Tr}(A(t-r,v')D_{v'}^2q)=0$. In particular, the mass of
$K_{s,s+h}-K^D_{s,s+h}$ is $q(h,v)$.
We set $\delta=(4\sqrt n)^{-1}$.  For $\alpha>0$,
\[
    \Psi(r,v')=\sum_{j=1}^n\sum_{\varepsilon=\pm1}
    \exp\left(\alpha\varepsilon(v'_j-v_j)-\alpha\delta
    +\Lambda\alpha^2r\right)
\]
satisfies
$(\partial_r-\mbox{Tr}(A(t-r,v')D_{v'}^2))\Psi\geq0$.  If
$v'\in\partial B_1(v_0)$ and $v\in B_{\frac{3}{4}}(v_0)$, then
$\varepsilon(v'_j-v_j)\geq\delta$ for some $j$ and $\varepsilon$.
Comparison, with $\alpha=\frac{\delta}{2\Lambda h}$, gives
\[
  q(h,v)\leq Ce^{-\frac{c}{h}}.
    \refstepcounter{equation}\tag{\theequation}
    \label{e:short-boundary}
\]
At scale $R$ the right-hand side is $Ce^{-\frac{cR^2}{h}}$.  Since
$0\leq K^D\leq K$, the difference is positive with this mass;
Proposition~\ref{p:decay} and the triangle inequality give
\eqref{e:local-Dirichlet}.

We now include the data on the velocity boundary by splitting $\omega^D_{s,v}$ at time $s+h$, with $0<h\leq c_0R^2$. The part that reaches the velocity boundary before $s+h$ has mass at most $Ce^{-\frac{cR^2}{h}}$. The remaining part first evolves inside $D$ according to $K^D_{s,s+h}$. From time $s+h$ it continues to the terminal face or the velocity boundary according to a non-negative kernel of mass one. This last evolution cannot increase total variation, so~\eqref{e:local-Dirichlet} gives
\[
  \norm{\widehat\omega_{s,v}^{D, \xi}}_{\TV}
  \leq C\left(e^{-\frac{cR^2}{h}}+e^{-ch\abs\xi^{\frac{2}{3}}}\right).
\]
For $R^3\abs\xi\geq1$ we choose $h=c_0R\abs\xi^{-\frac{1}{3}}$; for the other
frequencies we use the mass bound.  This proves~\eqref{e:local-decay}.
\end{proof}

\subsection{Differentiating the representation}

The Fourier decay just proved allows us to differentiate the boundary kernel in position as many times as needed. To estimate derivatives of the solution, we also need to integrate these kernel derivatives in position. This is possible because the velocity stays within distance $R$ of $v_0$ until exit, so the spatial displacement is bounded. We first record this support bound and then differentiate the representation formula.

For every unit vector $e$, the affine functions
\[
    \phi_\pm(r,x',w)
    =\pm e\cdot[x'-x-(r-s)v_0]-R(r-s)
\]
satisfy
\[
    \mathcal K\phi_\pm=\pm e\cdot(w-v_0)-R\leq0
    \quad\hbox{for }w\in D.
\]
Comparison, applied to non-negative increasing smooth functions of $\phi_\pm$ that vanish on $(-\infty,0]$,
shows that $\omega^D_{s,v}$ is supported in
\[
    \abs{x'-x-(r-s)v_0}\leq R(r-s)\leq c_0R^3.
    \refstepcounter{equation}\tag{\theequation}
    \label{e:boundary-support}
\]
With $y=x'-x-(r-s)v_0$ as before, the Fourier transform $\widehat\omega_{s,v}^{D,\xi}$ is still a finite complex measure in the time and velocity variables. Boundedness of the displacement makes it continuous in $\xi$ in total variation. Moreover, the decay in~\eqref{e:local-decay} is integrable even after multiplication by any power of $\abs\xi$. Fourier inversion therefore gives a density $\omega^D_{s,v}(y,\cdot)$ in displacement, taking values in finite measures of the remaining variables. Differentiating $m$ times under the Fourier integral gives, for $m\geq1$,
\[
    \sup_y\norm{D_y^m\omega^D_{s,v}(y,\cdot)}_{\TV}
    \leq C\int_{\R^n}\abs\xi^m
       e^{-c(R^3\abs\xi)^{\frac{1}{3}}}\dd\xi
    \leq C_mR^{-3(n+m)}.
\]
The projection of the support in~\eqref{e:boundary-support} onto the spatial variable has volume at most
$CR^{3n}$, hence
\[
    \int_{\R^n}
    \norm{D_y^m\omega^D_{s,v}(y,\cdot)}_{\TV}\dd y
    \leq C_mR^{-3m}.
\]
We now apply the representation to a solution of $\mathcal Lu=0$ in $Q_R^-(P_0)$. For $0<R'<R$, we reflect $(t,x)\mapsto(-t,-x)$, take $D=B_{R'}(v_0)$, and start the representation from each point in the image of $Q_{\frac{3R'}{4}}^-(P_0)$. Over the time interval of length $c_0(R')^2$, the time and spatial bounds are respectively $(\frac{9}{16}+\frac{1}{8})(R')^2<R^2$ and $(\frac{27}{64}+\frac{1}{8})(R')^3<R^3$, while $\abs{w-v_0}\leq R'<R$.

These bounds place every boundary point used in the representation strictly inside the original cylinder $Q_R^-(P_0)$ after reflecting back. We therefore use only interior values of $u$ and need no trace on $\partial Q_R^-(P_0)$. An $x$-cutoff equal to one near this support does not change the representation. Subtracting a constant, differentiating the kernel, and applying the preceding $L^1$ estimate gives, after reflecting back,
\[
    (R')^{3m}\norm{D_x^mu}_{L^\infty(Q_{\frac{3R'}{4}}^-(P_0))}
    \leq C_m\mathop{\rm osc}_{Q_R^-(P_0)}u,
    \qquad m\geq1,
\]
with $C_m$ depending only on $n,\lambda,\Lambda,m$, independently of $R'$. Letting $R'\uparrow R$ yields
\[
    R^{3m}\norm{D_x^mu}_{L^\infty(Q_{\frac{3R}{4}}^-(P_0))}
    \leq C_m\mathop{\rm osc}_{Q_R^-(P_0)}u,
    \qquad m\geq1.
    \refstepcounter{equation}\tag{\theequation}
    \label{e:x-gradient}
\]

\section{Hölder continuity}
\label{sec:holder}

The spatial gradient bound lets us recover regularity in time and velocity from the parabolic Krylov--Safonov theory. We move the transport term to the right-hand side, where it becomes $-v\cdot\nabla_xu$. On an interior cylinder, this gives a parabolic equation with a bounded source. Its Hölder estimate controls changes in time and velocity; the spatial gradient bound controls changes in position. Together they give Hölder continuity in all three variables.

There are $\alpha\in(0,1)$ and $C>0$, depending only on
$n,\lambda,\Lambda$, such that every bounded classical solution of
$\mathcal Lu=0$ in $Q_R^-(P_0)$, with $A=A(t,v)$, satisfies, for
$P=(t,x,v)$ and $P'=(s,y,w)$ in $Q_{\frac{R}{2}}^-(P_0)$,
\[
    \abs{u(P)-u(P')}
    \leq C\mathop{\rm osc}_{Q_R^-(P_0)}u
    \left(
    \frac{\abs{t-s}^{\frac{1}{2}}+\abs{v-w}
    +\abs{x-y-(t-s)v_0}^{\frac{1}{3}}}{R}
    \right)^\alpha.
    \refstepcounter{equation}\tag{\theequation}
    \label{e:holder}
\]
By a Galilean change of variables and kinetic scaling, it suffices to prove the estimate
for $P_0=0$ and $R=1$. If the oscillation is zero, the estimate is immediate. Otherwise, after subtracting a constant and dividing by the
oscillation, we may assume $\abs u\leq1$.  By~\eqref{e:x-gradient},
$\norm{\nabla_xu}_{L^\infty(Q_{\frac{3}{4}}^-(0))}\leq C$.  For each fixed
$x\in B_{\frac{1}{8}}$,
\[
    \partial_tu-\mbox{Tr}(A(t,v)D_v^2u)=-v\cdot\nabla_xu
    \quad\hbox{in }(-\frac{9}{16},0)\times B_{\frac{3}{4}},
\]
and the right-hand side is bounded by $C$.  The parabolic
Krylov--Safonov estimate~\cite{KrylovSafonov} at fixed $x$, combined with the spatial gradient bound, gives, for
$t,s\in(-\frac{1}{4},0)$, $v,w\in B_{\frac{1}{2}}$ and $x,y\in B_{\frac{1}{8}}$,
\[
    \begin{aligned}
    \abs{u(t,x,v)-u(s,y,w)}
    &\leq C\left\{
      \bigl(\abs{t-s}^{\frac{1}{2}}+\abs{v-w}\bigr)^\alpha
      +\abs{x-y}\right\}\\
    &\leq C\bigl(\abs{t-s}^{\frac{1}{2}}+\abs{v-w}
      +\abs{x-y}^{\frac{1}{3}}\bigr)^\alpha.
    \end{aligned}
\]
Undoing the normalisation and the changes of variables gives~\eqref{e:holder}.

\section{Autonomous coefficients in dimension one}
\label{s:spatial-roughness}

For $a=a(x,v)$, Fourier transformation in position no longer separates the frequencies. Away from zero velocity, we can recover this separation by using position as time. The resulting estimate stops when velocity leaves a prescribed interval. To obtain an estimate on a kinetic cylinder, we must also account for later returns to that interval.

We first bound these returns using velocity barriers. This gives the maximum principle at $p=6$, and hence Hölder continuity. We then propagate positive lower bounds along kinetic paths to obtain a Harnack comparison. This comparison gives better bounds on returns at later times, which improve the maximum principle to every $p>4$. We explain these steps below; their detailed proofs are in~\cite[Sections~8--9]{Companion}. We work first with smooth $a$, with constants independent of its derivatives, and use Section~\ref{s:remarks} for the approximation.

\subsection{Position as time}

After reflection, the operator is $\mathcal K=\partial_s+v\partial_X+a(X,v)\partial_{vv}$, with $a$ again denoting the reflected coefficient. On a bounded positive velocity interval separated from zero, division by $v$ gives
\[
    \frac{1}{v}\mathcal K
    =\partial_X+\frac{1}{v}\partial_s+\frac{a(X,v)}{v}\partial_{vv}.
\]
Here $X$ is the new time and $s$ is transported. The diffusion coefficient is independent of $s$, so we can Fourier transform in that variable. For terminal data prescribed at a fixed $X$ and depending only on velocity, with zero data at the velocity endpoints, the solution is independent of $s$. It therefore solves $\partial_X V+\frac{a}{v}\partial_{vv}V=0$. After reversing $X$, this is a uniformly parabolic equation. Thus the common-minorant argument uses classical parabolic Harnack in position-as-time and velocity, with the old time integrated out.

We centre the velocity interval at $\bar v\ne0$ and choose $r\leq\frac{|\bar v|}{2}$. We use
\[
    I_{\mathrm c}=\bar v+\left[-\frac{r}{4},\frac{r}{4}\right],
    \qquad I=\bar v+\left(-\frac{r}{2},\frac{r}{2}\right),
    \qquad I^*=\bar v+\left(-\frac{3r}{4},\frac{3r}{4}\right).
\]
The density is estimated on $I_{\mathrm c}$; $I$ and $I^*$ will specify when a visit begins and ends. The normalised change of variables treats both signs of $\bar v$. It gives~\eqref{e:intro-Lop} with uniform ellipticity and with the derivative of the transport rate bounded above and below by positive constants. The Fourier argument then gives an $L^q$ bound $C_qr^{\frac{6}{q}-4}$ for the space-time Green density killed at $\partial I^*$, for $1<q<\frac{3}{2}$~\cite[Lemma~8.2 and Proposition~8.3]{Companion}. The power comes from the kinetic volume $r^6$ and the source factor $r^2$.

\subsection{Counting visits}

We work at cylinder radius one and first suppose $|v_0|\leq8$. Near zero velocity we take $\bar v=\pm2r$, with $0<r\leq1$. We choose a bounded velocity interval of fixed length containing the closures of both the cylinder's velocity interval and $I^*$ in its interior. By autonomy, we shift the starting time to zero and write $P=(0,X,v)$ for the starting point in the reflected cylinder: $X$ is the initial position and $v$ the initial velocity. Starting with unit mass at $P$, we follow the positive evolution until the terminal time or departure from this larger interval. Its Green measure records the mass accumulated over time and dominates the measure on the original velocity strip. It is therefore enough to estimate this larger measure. The observation time is at most one.

A visit begins when velocity first reaches $\overline I$, and ends when it leaves $I^*$ or the observation time ends. After that departure, a new visit begins only when velocity reaches $\overline I$ again. Motion across $\partial I$ while still inside $I^*$ is part of the same visit. If the starting velocity lies in $\overline I$, the first visit begins immediately. Otherwise, it begins at the first arrival in $\overline I$.

More precisely, for a Borel set $E$ in time, position and velocity, $\gamma_k(E)$ is the amount of mass whose $k$th visit begins in $E$. We construct these measures by alternating evolution in $I^*$ with evolution in the part of the enlarged interval outside $\overline I$, using the exit measures as in~\cite[Lemma~8.6]{Companion}. In particular, $\gamma_1=\delta_P$ if the starting velocity lies in $\overline I$. Otherwise, $\gamma_1$ is the distribution of the first arrival there before observation ends. In either case its total mass is at most one. The sum of the $\gamma_k$ can have larger mass: a portion of the evolution is counted again whenever it leaves $I^*$ and returns to $\overline I$.

We divide elapsed time into $J_0=[0,r^2]$ and $J_j=(jr^2,(j+1)r^2]$ for $j\geq1$, and set
\[
    m_j=\sum_{k\geq1}\gamma_k(J_j\times\R_X\times\overline I).
\]
Thus $m_j$ counts, with their kernel weights, visits beginning during $J_j$. It does not measure the mass present in $I^*$ at one particular time.

The quadratic function $(v-v_-)(v_+-v)$, where $I^*=(v_-,v_+)$, bounds the mean duration before departure by $Cr^2$. Repeating this bound makes the mass still inside $I^*$ decrease exponentially on the time scale $r^2$. Contributions from visits beginning far apart in time therefore have little overlap. If $G_{P,I}$ denotes the density of the Green measure on the original velocity strip, starting from $P$ and restricted to $\{v\in I_{\mathrm c}\}$, the estimate for one visit gives
\[
    \norm{G_{P,I}}_{L^q}^{q}
    \leq C_qr^{6-4q}\sum_jm_j^q,
    \qquad 1<q<\frac{3}{2}.
    \refstepcounter{equation}\tag{\theequation}
    \label{e:auto-visits}
\]
The norm is in time, position and velocity. The precise summation is~\cite[Corollary~8.4]{Companion}: the exponentially decreasing contributions form a summable sequence. The dependence on $\sum_jm_j^q$ is what allows us to improve the estimate.

\subsection{The preliminary estimate}

The same velocity-barrier argument as in~\cite[Lemma~8.6]{Companion}, on the fixed enlarged strip, gives two bounds: $\sum_jm_j\leq\frac{C}{r}$ over the whole observation interval, and $m_j\leq C$ on each interval of length $r^2$. The first bound follows from the $Cr$ bound on time spent in $I^*$, divided by the factor $r^2$ supplied by the quadratic barrier. Applying the same barrier over a time interval of length $r^2$ gives the second bound. Consequently,
\[
    \sum_jm_j^q\leq(\sup_jm_j)^{q-1}\sum_jm_j\leq\frac{C_q}{r},
    \qquad
    \norm{G_{P,I}}_{L^q}\leq C_qr^{\frac{5}{q}-4}.
\]
In particular, at $q=\frac{6}{5}$ the density bound is $Cr^{\frac{1}{6}}$.

Every sufficiently small non-zero velocity lies in $I_{\mathrm c}$ for radii between two fixed multiples of its absolute value. Integration in $\frac{\dd r}{r}$ counts it with a fixed weight. Since
\[
    \int_0^1r^{\frac{1}{6}}\frac{\dd r}{r}<\infty,
\]
the complete Green density is in $L^{\frac{6}{5}}$, uniformly in $P$. Velocities bounded away from zero are covered by finitely many fixed-scale intervals, and cylinders with $|v_0|>8$ by a single interval; the Green measure gives no mass to $\{v=0\}$ \cite[Lemma~8.5 and Propositions~8.3, 8.9]{Companion}. Hölder's inequality against the source, followed by comparison, gives the maximum principle at $p=6$. This estimate is already strong enough to imply Hölder continuity for homogeneous solutions, by~\cite[Theorem~9.1, Corollary~9.10]{Companion}.

\subsection{From Hölder continuity to Harnack}

We apply Hölder continuity to a non-negative solution of $\mathcal Lu=0$ in the original variables. It turns a positive value into a positive lower bound on a neighbourhood. At a fixed interior scale, if the solution has value $k>0$ and nearby supremum $M$, it stays above $\frac{k}{2}$ on a kinetic cylinder of radius $c(\frac{k}{M})^{\frac{1}{\alpha}}$. Its velocity width is of that order, its time length is the square of that order, and its position width is the cube.

A moving barrier carries this lower bound along a kinetic path, whose position derivative is its velocity~\cite[Lemma~9.3]{Companion}. We also enlarge the cylinder as we move it. Doubling its radius costs only a fixed factor in the lower bound. Starting from radius $\ell$, only order $\log\frac{1}{\ell}$ doublings are needed to reach a fixed radius. The lower bound at a later point is therefore at least $c\ell^\beta k$, for a fixed $\beta>0$. This explains why propagation from the small neighbourhood remains useful.

The neighbourhood size still depends on $M$. Young's inequality lets us bound the supremum on an inner cylinder by a multiple of the later value plus an arbitrarily small multiple of the supremum on a larger cylinder. We repeat this estimate on nested cylinders. The small factor makes the last supremum disappear in the limit, and is chosen to offset the cost of the shrinking gaps between cylinders. After reflecting position, this gives the following comparison for the full-space autonomous evolution $\mathsf S_t$ with generator $v\partial_X+a(X,v)\partial_{vv}$:
\[
    \mathsf S_tF(X,v)\leq C\mathsf S_\tau F(X,v),
    \qquad 2t\leq\tau\leq3t,\quad |v|\leq\sqrt t,
    \quad F\geq0,
    \refstepcounter{equation}\tag{\theequation}
    \label{e:auto-harnack}
\]
for bounded Borel $F$~\cite[Lemma~8.7]{Companion}. Here $t$ is elapsed physical time. The constants and the Hölder exponent depend only on $\lambda,\Lambda$. This kinetic comparison follows from the preliminary $p=6$ estimate.

\subsection{Returns at later times}

For an indicator $\mathbf1_{I^*}$, the value $\mathsf S_t\mathbf1_{I^*}(X,v)$ is the mass in $I^*$ at time $t$, starting from $(X,v)$. We first bound its integral in time. The function $\psi_r(v)=\sqrt{r^2+v^2}$ has slope at most one and second derivative at least $\frac{c}{r}$ on $|v|\leq3r$. Its Green identity, together with the bound $2\Lambda T$ on the mean squared velocity change, gives
\[
    \int_0^T\mathsf S_t\mathbf1_{\{|v|\leq3r\}}(X,v)\dd t
    \leq Cr\sqrt T.
\]
This estimate alone allows a large amount of mass at a particular time. The Harnack comparison supplies the missing control. Our choice $\bar v=\pm2r$ gives $I^*\subset\{|v|\leq\frac{11r}{4}\}\subset\{|v|\leq3r\}$, while $t\geq9r^2$ implies $3r\leq\sqrt t$. Thus, for $|v|\leq3r$ and $t\geq9r^2$, averaging~\eqref{e:auto-harnack} over $\tau\in[2t,3t]$ gives
\[
    \mathsf S_t\mathbf1_{I^*}(X,v)
    \leq\frac{C}{t}\int_{2t}^{3t}\mathsf S_\tau\mathbf1_{\{|v|\leq3r\}}(X,v)\dd\tau
    \leq C\frac{r}{\sqrt t}.
\]
Together with the mass bound one for smaller times, this is $C(1+\frac{t}{r^2})^{-\frac{1}{2}}$.

We now convert mass present into the count of visits defined above. For the starting point $P=(0,X,v)$ fixed above, we first assume $v\in\overline I$. The quadratic barrier $\phi(v)=(v-v_-)(v_+-v)$ vanishes at departure from $I^*$ and is bounded below by $cr^2$ at the beginning of every visit. Also, $-\mathcal K\phi=2a$. We sum its Green identities over the portions of all visits lying in a later interval $J_j$, $j\geq1$, and discard the favourable contribution from mass already present at its left endpoint. This gives
\[
    m_j\leq C\left[
      \mathsf S_{(j+1)r^2}\mathbf1_{I^*}(X,v)
      +\frac{1}{r^2}\int_{jr^2}^{(j+1)r^2}\mathsf S_t\mathbf1_{I^*}(X,v)\dd t
    \right]
    \leq C(1+j)^{-\frac{1}{2}},\qquad j\geq1.
\]
The two terms account for mass still inside at the end and mass accumulated during the interval. Contributions at departure vanish because $\phi=0$ there. The initial visit is included separately in $m_0\leq C$.

If instead $v\notin\overline I$, the first arrival may be late. We apply the bound just proved from each arrival point, measuring time from that arrival, and integrate it against the first-arrival measure $\gamma_1$. Its total mass is at most one. Thus we average time translates of the same decreasing sequence. This preserves, up to a fixed factor for shifted time intervals, the bound on the sum of its $q$th powers:
\[
    \sum_jm_j^q
    \leq C_q\sum_{0\leq j\leq 2+r^{-2}}(1+j)^{-\frac{q}{2}}
    \leq C_qr^{q-2},
    \qquad 1<q<2.
\]
Using this in~\eqref{e:auto-visits} gives
\[
    \norm{G_{P,I}}_{L^q}^q\leq C_qr^{6-4q}r^{q-2}
    =C_qr^{4-3q},
    \qquad
    \norm{G_{P,I}}_{L^q}\leq C_qr^{\frac{4}{q}-3},
    \qquad 1<q<\frac{3}{2}.
\]
This is the improved interval estimate~\cite[Lemma~8.8]{Companion}, uniformly in the starting point. Its gain comes from controlling when visits begin, rather than only their total count.

\subsection{Summing near zero velocity}

We combine the interval estimates by the same integration in $\frac{\dd r}{r}$ used for the preliminary result. The improved bound is summable precisely when
\[
    \int_0^1 r^{\frac{4}{q}-3}\frac{\dd r}{r}<\infty
    \quad\Longleftrightarrow\quad q<\frac{4}{3}
    \quad\Longleftrightarrow\quad p=\frac{q}{q-1}>4.
\]
Velocities away from zero, including cylinders centred at large velocities, are covered by the estimate from the change of time variable. Restoring the cylinder radius gives the factor $R^{\frac{6}{q}-4}=R^{2-\frac{6}{p}}$~\cite[Proposition~8.9]{Companion}. Integration against the source and comparison prove~\eqref{eq:abp-kin}. This argument gives $p>4$. The Bellman improvement recorded after Theorem~\ref{thm:aleksandrov-sharp} also includes $p=4$. 

\section{Mixing estimates for rough spherical velocity diffusion}
\label{s:mixing}

We prove Theorem~\ref{thm:mixing} by constructing the transition kernel, estimating its Fourier modes, and then passing from total-variation mixing to the $L^2$ bound.

\subsection{The Krylov weak solutions}

In the previous sections, we have proved estimates for classical solutions. In this subsection, our primary interest lies in the properties of the evolution process rather than local regularity estimates, we also develop a framework for the evolution starting from measure-valued initial data. 

We use the velocity evolution supplied by Krylov's construction below and obtain position by integrating velocity. Its terminal-value solution operators preserve order and constants and satisfy comparison with smooth sub- and supersolutions. In the autonomous case, spatial reflection gives a semigroup of initial-value solutions to~\eqref{e:spherical-equation}, as explained in the spectral-gap proof. We call the solutions supplied by these operators \emph{Krylov solutions}. Comparison gives agreement with classical solutions whenever they exist with the same data. We do not assert uniqueness among all weak solutions.

We first construct the evolution of probability measures. In the autonomous case, densities relative to its invariant measure give the $L^2$ operators used in the spectral-gap proof.

For the adjoint equation, a solution is a family of probability measures $\rho_t$ on $\T^n\times\mathbb S^{n-1}$, continuous in time against continuous test functions, such that
\begin{equation}\label{e:spherical-weak}
  \int_{\mathbb T^n \times \mathbb S^{n-1}} \varphi\dd\rho_t-\int_{\mathbb T^n \times \mathbb S^{n-1}} \varphi\dd\rho_s
  =\int_s^t\int_{\mathbb T^n \times \mathbb S^{n-1}} \left(v\cdot\nabla_x\varphi
  +A(\tau,v):\nabla_S^2\varphi\right)\dd\rho_\tau\dd \tau
\end{equation}
for every smooth $\varphi(x,v)$ and $0\leq s\leq t$. All spatial integrals on the torus use normalised Haar measure.

We apply Krylov's construction and Markov selection~\cite{KrylovSelection,KrylovBook} to the velocity operator, using a finite family of sphere charts. In local coordinates it has the form
\[
  a^{ij}\partial_{ij}+b^k\partial_k,
  \qquad b^k=-a^{ij}\Gamma^k_{ij},
\]
where $\Gamma^k_{ij}$ are the Christoffel symbols. On the smaller chart domains, $a$ is uniformly elliptic and $b$ is bounded, with bounds depending only on the fixed atlas and ellipticity. No derivative of $A$ occurs. The local martingale problems describe the same intrinsic operator on overlaps; localisation and compactness of the sphere give a global evolution, to which the selection theorem applies.
This construction uses stochastic trajectories as an intermediate step. Smooth approximations produce velocity diffusions, and integrating $\dot X=V$ gives their position. The joint distributions solve the full kinetic equation above. To pass to measurable $A$, one uses the parabolic Aleksandrov estimate, which controls occupation against the coefficient approximation errors, so the limiting distributions satisfy the weak equation. We use only the resulting evolution of measures.

Krylov's Markov selection supplies a choice of evolution consistent with composition in time. We fix one such velocity evolution, time homogeneous when $A$ is autonomous, and obtain the kinetic evolution by integrating velocity. We do not claim uniqueness among all weak solutions. We use the kernel notation of Section~\ref{s.fourier}, now with velocities on $\mathbb S^{n-1}$. For fixed $s$ and $v$, the measure $K_{s,t}(v,\dd y,\dd w)$ is the solution at time $t$ supplied by the chosen evolution with initial measure $\delta_{(0,v)}$ at time $s$. Thus $v$ is the starting velocity, $w$ the terminal velocity, and $y$ the position displacement. We use measure notation because the argument does not require a density in velocity.

When a density in position exists, we write
\[
K_{s,t}(v,\dd y,\dd w) =K_{s,t}(y,v,\dd w)\dd y,
\]
where $\dd y$ is normalised Haar measure on $\T^n$. The expression on the right is a density in $y$ and remains a measure in $w$. As in Section~\ref{s.fourier}, the density notation places position displacement first.

On the torus, its velocity marginal is $P_{s,t}$ and
\[
\widehat K_{s,t}^\xi(v,\dd w)
=\int_{\T^n}e^{-i\xi\cdot y}K_{s,t}(v,\dd y,\dd w),
\qquad \xi\in\Z^n,
\qquad
|\widehat K_{s,t}^\xi|\leq P_{s,t},
\quad \widehat K_{s,t}^0=P_{s,t}.
\]
We denote the action on initial measures by $\mathsf U_{s,t}$. For every bounded Borel function $F$,
\[
\int F\dd(\mathsf U_{s,t}\rho)
=\int\left[\int F(x+y,w)K_{s,t}(v,\dd y,\dd w)\right]\rho(\dd x,\dd v).
\]
This action extends linearly to signed and complex measures.

We also use the corresponding kernel on $\R^n\times\mathbb S^{n-1}$, obtained from the same chosen velocity evolution, and denote it again by $K$. Projecting position onto $\T^n$ gives the torus kernel, so their Fourier transforms agree at every integer frequency. For the full-space kernel, the Fourier integral is over $\R^n$ and the frequency may be any vector in $\R^n$. We perform spatial rescaling on the full space before restricting to torus frequencies.

The selected evolution preserves positivity and mass. Stopping at velocity boundaries gives the Dirichlet kernels, and the stopped generator identity gives comparison with smooth barriers. Parabolic Harnack applies to nonnegative terminal-value solutions of the velocity evolution~\cite[Theorem~1.12 and its proof in Section~4]{Yastrzhembskiy2020}. It supplies the common lower bound~\eqref{e:parabolic-minoration} used below, without requiring classical regularity of $\rho_t$. All estimates are uniform over the choice of evolution; the invariant measure and generator refer to that choice.

\subsection{The Döblin--Fourier cancellation estimate}

We first suppose $\lambda=1$ and write $\kappa=\frac{\Lambda}{\lambda}$. We prove the estimate at every real frequency $\xi\ne0$. We set $e=\frac{\xi}{\abs\xi}$, fix the starting direction $v$, and write
\[
  e=av+b,\qquad b\perp v, \qquad a^2+|b|^2=1.
\]
For $\abs\xi\geq1$, we fix $0<r_0\leq\frac18$. We let $\bar r_0$ be the positive root of $|\xi|(|b|r^3+|a|r^4)=1$ and set $r=\min\{r_0,\bar r_0\}$. Then
\[
  \chi:=|\xi|(|b|r^3+|a|r^4)\in[r_0^4,1],
  \qquad r\leq|\xi|^{-\frac14}.
\]
Indeed, $|b|+\abs a\geq1$ and $r\leq1$ imply $|\xi|r^4\leq\chi\leq1$; if $r=r_0$, then $\chi\geq |\xi|r_0^4\geq r_0^4$; otherwise $\chi=1$.

We use normal coordinates $w=\operatorname{Exp}_v(rY)$ and time $t=s+r^2\tau$. On $\abs Y<4$ the rescaled velocity operator has ellipticity and drift bounds depending only on $n,\kappa$. The rate of change of the Fourier phase relative to the starting velocity is therefore
\[
  F(Y)=|\xi|r^2\bigl(e \cdot\operatorname{Exp}_v(rY)-a\bigr),
  \qquad
  e \cdot\operatorname{Exp}_v(rY)
  =a\cos(r\abs Y)+(b\cdot Y)\frac{\sin(r\abs Y)}{\abs Y}.
\]
The quotient extends smoothly at zero, and $F(0)=0$, $\abs{\nabla F}\leq C_*\chi\leq C_*$ on $\abs Y\leq2$. The scale therefore controls phase errors uniformly, including at the poles of $e\cdot w$. Figure~\ref{fig:spherical} shows why the poles determine the frequency power $\frac{1}{2}$.

\begin{figure}[t]
  \centering
\begin{tikzpicture}[>={Stealth[length=2.2mm]},lab/.style={font=\small},note/.style={font=\small}]
  \def\Rad{2.0}
  \def\arcr{25}     
  \begin{scope}
    \draw[thin] (0,0) circle (\Rad);
    \draw[dfgrey,thin,densely dotted] (-\Rad,0) -- (\Rad,0);
    \draw[->,thick] (0,0) -- (0,\Rad+0.75) node[above,lab] {$e=\frac{\xi}{\abs\xi}$};
    \draw[dfone,line width=2pt,-{Stealth[length=3mm]}] (0:\Rad) arc (0:-\arcr:\Rad);
    \draw[dftwo,line width=2pt,-{Stealth[length=3mm]}] (90:\Rad) arc (90:90-\arcr:\Rad);
    \fill (0:\Rad) circle (1.6pt) node[left,note] {$v$};
    \fill (90:\Rad) circle (1.6pt) node[below=1pt,note] {$v$};
    \node[note,dfone,right] at (-\arcr/2:\Rad+0.08) {$r$};
    \node[note,dftwo,above right=-1pt] at (90-\arcr/2:\Rad+0.05) {$r$};
    \node[note,dfone,anchor=west] at (-\Rad+0.1,-0.45) {equator, $v\perp e$};
    \node[note,dftwo,anchor=east] at (-0.45,\Rad+0.2) {pole, $v=e$};
    \def\X{3.15}
    \draw[->] (\X,-\Rad-0.15) -- (\X,\Rad+0.45) node[above,lab] {$e\cdot w$};
    \foreach \y/\l in {-\Rad/-1, \Rad/1}
      \draw (\X,\y) -- ++(-0.06,0) node[left,note] {$\l$};
    \draw[dfone,thin,densely dashed] (0:\Rad) -- (\X,0);
    \draw[dfone,thin,densely dashed] (-\arcr:\Rad) -- (\X,{-\Rad*sin(\arcr)});
    \draw[dftwo,thin,densely dashed] (90:\Rad) -- (\X,\Rad);
    \draw[dftwo,thin,densely dashed] (90-\arcr:\Rad) -- (\X,{\Rad*cos(\arcr)});
    \draw[dfone,line width=2.2pt] (\X,{-\Rad*sin(\arcr)}) -- (\X,0);
    \draw[dftwo,line width=2.2pt] (\X,{\Rad*cos(\arcr)}) -- (\X,\Rad);
    \node[note,dfone,right] at (\X+0.08,{-\Rad*sin(\arcr)/2}) {$\sin r\approx r$};
    \node[note,dftwo,right] at (\X+0.08,{\Rad*(1+cos(\arcr))/2-0.02}) {$1-\cos r\approx\frac{r^2}{2}$};
    \node[lab] at (-\Rad-0.35,\Rad+0.75) {(a)};
  \end{scope}

  \begin{scope}[shift={(6.9cm,-2.1cm)},x=0.36cm,y=0.5cm]
    \begin{scope}[yshift=4.5cm]
      \draw[dfone,thick] plot file {\figdata f3_equator.dat};
      \draw[dfone!70,thin] plot file {\figdata f3_th1.dat};
      \draw[dfone!40,thin] plot file {\figdata f3_th2.dat};
      \draw[dftwo,thick] plot file {\figdata f3_pole.dat};
    \end{scope}
    \draw[->] (0,0) -- (12.6,0);
    \node[lab] at (6,-1.35) {frequency $q=\abs\xi$};
    \draw[->] (0,0) -- (0,8.6);
    \node[lab,rotate=90] at (-3.4,4.3) {time scale $r^2$};
    \foreach \x/\l in {0/10^0, 4/10^4, 8/10^8, 12/10^{12}}
      \draw (\x,0) -- ++(0,-0.12) node[below,note] {$\l$};
    \foreach \y/\l in {1/10^{-8}, 4/10^{-5}, 7/10^{-2}}
      \draw (0,\y) -- ++(-0.12,0) node[left,note] {$\l$};
    \node[note,dftwo,right] at (8.6,5.2) {pole: $q^{-\frac12}$};
    \node[note,dfone,right] at (8.4,0.6) {equator: $q^{-\frac23}$};
    \node[note,dfone!70,align=left] at (5.6,1.7) {$\theta=0.1,\ 0.01$};
    \node[lab] at (-2.2,9.5) {(b)};
  \end{scope}
\end{tikzpicture}
  \caption{\small{Spherical velocities, with $\lambda=1$, $q=\abs\xi$ and $e=\frac{\xi}{q}=av+b$, $b\perp v$: a velocity $w$ moves the phase $\xi\cdot x$ at speed $q\,e\cdot w$. \textbf{(a)} On a great circle through $v$ and $e$, an excursion of length $r$ changes $e\cdot w$ by $\sin r\approx r$ at the equator, but only by $1-\cos r\approx\frac{r^2}{2}$ at the pole. Held for a time of order $r^2$, it builds a relative phase of order $qr^3$, respectively $qr^4$. \textbf{(b)} The time scale $r^2$ for $r=\min\{r_0,\bar r_0\}$, where $\bar r_0$ is the positive root of $q(\abs br^3+\abs ar^4)=1$ and $r_0=\frac18$, for several angles $\theta$ between $v$ and $e$ (so $\abs a=\cos\theta$, $\abs b=\sin\theta$). In the uncapped, high-frequency regime it is $q^{-\frac23}$ at the equator and $q^{-\frac12}$ at the pole; the horizontal plateau at low frequencies comes from imposing $r\leq r_0$. The pole sets the rate $q^{\frac12}$ in~\eqref{e:spherical-DF}, which is optimal.}}
  \label{fig:spherical}
\end{figure}

When $b\ne0$, we choose a unit tangent vector $u=-\frac{b}{\abs b}$ if $a\geq0$, and $u=\frac{b}{\abs b}$ if $a<0$. When $b=0$, any unit tangent vector will do. The linear and quadratic phase changes then have the same sign:
\[
  \abs{F(zu)}=|\xi| r^2\left(|b|\sin(rz)+\abs a(1-\cos(rz))\right),
  \qquad 0\leq z\leq1.
\]
In particular, $\abs{F(u)}$ lies between $\frac\chi3$ and $\chi$. We keep one path at zero. The other travels to $u$ in unit time, stays there for time $H$, and returns in unit time. With
\[
  J=2\int_0^1\abs{F(zu)}\dd z,\qquad
  H=\frac{\pi-J}{\abs{F(u)}},\qquad
  L=2+H\leq L_*:=2+\frac{3\pi}{r_0^4},
\]
the accumulated phases differ by $\pi$. Here $J\leq2\abs{F(u)}\leq2<\pi$, so $H>0$, and both paths have speed at most one.

The tube argument of Section~\ref{s.fourier} also applies with bounded chart drift. For a tube of radius $\eta$ and centre $\gamma$, the same function $\psi=(1-\frac{\abs{Y-\gamma}^2}{\eta^2})_+^N$ satisfies
\[
  (\partial_\tau+a^{ij}\partial_{ij}+b^i\partial_i)\psi
  \geq-C\bigl(\eta^{-2}+(B+\abs{\gamma'}_\infty)^2\bigr)\psi,
\]
where $B$ bounds the drift. The maximum principle gives a mass $m_*>0$ in each Dirichlet kernel along the two paths, uniformly in $|\xi|,v,s$. We choose fixed $0<\eta<\rho<\frac18$ so that
\[
  \varepsilon=C_*\eta L_*+16C_*\rho^3<\frac14.
\]
We scale each tube measure to mass $\frac{m_*}{2}$, using a factor at most $\frac{1}{2}$, so that their sum is dominated by the full kernel. We continue both for time $4\rho^2$ in the velocity ball $B_{4\rho}(0)$. Parabolic Harnack, followed by the constant-path barrier, gives a positive measure $\Theta$ below both terminal velocity marginals, of mass at least $\theta_*>0$. Multiplication by $\frac{\dd\Theta}{\dd\nu_i}$, where $\nu_i$ are those marginals, gives measures $N_i$ with velocity marginal $\Theta$ and sum dominated by the full kernel. Their phases lie within $\varepsilon$ of two values differing by $\pi$: the errors from the tubes and common continuation are bounded by the two terms defining $\varepsilon$.

The cancellation calculation of Section~\ref{s.fourier} now gives
\[
  \norm{\widehat K_{s,s+r^2(L+4\rho^2)}^\xi(v,\cdot)}_{\TV}
  \leq1-2(1-\varepsilon)\theta_*<1.
\]
We append unrestricted evolution up to the common time $s+(L_*+4\rho^2)\abs\xi^{-\frac{1}{2}}$. This cannot increase total variation. Although the shorter construction time depends on $v$, the resulting contraction is at a common time and is uniform in $v$. Iteration therefore gives $Ce^{-c(t-s)\abs\xi^{\frac{1}{2}}}$ for $\abs\xi\geq1$.

For $0<\abs\xi\leq1$, we use the same paths constructed at reference frequency $e$, retaining portions with velocity marginal $\Theta$ as above. Their phase measures have supports within $\varepsilon$ of values differing by $\pi$. At frequency $\xi$, each phase is multiplied by $\abs\xi$. For any two phases in these supports, their absolute difference $\Delta$ lies in $[\pi-2\varepsilon,\pi+2\varepsilon]$, and
\[
  \left|e^{-i\abs\xi\phi_1}+e^{-i\abs\xi\phi_2}\right|
  =2\sqrt{1-\sin^2\left(\frac{\abs\xi\Delta}{2}\right)}
  \leq2-\abs\xi^2\cos^2\varepsilon.
\]
Here concavity of sine gives $\sin(\frac{\abs\xi\Delta}{2})\geq\abs\xi\sin(\frac{\Delta}{2})\geq\abs\xi\cos\varepsilon$. We disintegrate the two phase measures against $\Theta$ and integrate this inequality against the product of their conditional probabilities. The positive remainder then gives contraction by at least $\theta_*\abs\xi^2\cos^2\varepsilon$. Appending evolution to time $s+L_*+4\rho^2$ and iterating proves
\[
  \sup_v\norm{\widehat K_{s,t}^\xi(v,\cdot)}_{\TV}
  \leq Ce^{-c(t-s)\min\{\abs\xi^2,\abs\xi^{\frac{1}{2}}\}}.
\]
For general $\lambda$, we set $\tau=\lambda t$ and $X=\lambda x$ on the lifted space. The matrix becomes $\lambda^{-1}A(\frac{\tau}{\lambda},v)$, with bounds $1,\frac{\Lambda}{\lambda}$, and the frequency becomes $\frac{\xi}{\lambda}$. Undoing this change gives~\eqref{e:spherical-DF}. The change in position is made on the lift, before restricting to torus frequencies.

The frequency power $\frac{1}{2}$ is optimal in this uniform estimate. We start at a pole $v=e$ and set $\varphi(w)=1-e\cdot w$. Since $\nabla_S^2(e\cdot w)=-(e\cdot w)g_S$, where $g_S$ is the sphere metric, testing the velocity equation gives
\[
  \int_{\mathbb S^{n-1}}\varphi(w)P_{s,s+\tau}(e,\dd w)
  \leq(n-1)\Lambda\tau.
\]
At frequency $Re$, we remove the constant phase $e^{-iR\tau}$ and test the Fourier equation against $1$. The remaining phase speed is $R\varphi$, so domination by $P$ yields
\[
  \norm{\widehat K_{s,s+\tau}^{Re}(e,\cdot)}_{\TV}
  \geq1-\frac{n-1}{2}\Lambda R\tau^2.
\]
We take $e=(1,0,\dots,0)$ and let integers $R$ tend to infinity. A uniform bound $Ce^{-c\tau R^\alpha}$ with $\alpha>\frac{1}{2}$ would contradict this inequality at $\tau=R^{-\beta}$, where $\frac{1}{2}<\beta<\alpha$.

\subsection{Spatial regularity and mixing in total variation}

On the torus, the Fourier estimate gives spatial regularity directly. We set $\beta_\lambda=\min\{\lambda,\lambda^{-1}\}$. For every $k\in\Z^n\setminus\{0\}$,
\[
\min\left\{\frac{\abs k^2}{\lambda},
\lambda^{\frac{1}{2}}\abs k^{\frac{1}{2}}\right\}
\geq\beta_\lambda\abs k^{\frac{1}{2}}.
\]
With respect to normalised Haar measure $\!\dd y$, the spatial density of the torus transition measure is
\[
K_{s,t}(y,v,\dd w)
=
P_{s,t}(v,\dd w)
+
\sum_{k\ne0}e^{ik\cdot y}\widehat K_{s,t}^k(v,\dd w).
\]
The series and all its spatial derivatives converge uniformly in total variation. For a multi-index $\alpha$ of order $m=\abs\alpha$ and $0<\beta_\lambda(t-s)\leq1$, the Fourier bound gives
\begin{equation*}
  \sup_{y,v}\norm{\partial_y^\alpha K_{s,t}(y,v,\cdot)}_{\TV}
  \leq C^{m+1}(m!)^2[\beta_\lambda(t-s)]^{-2m-2n}.
\end{equation*}
This proves spatial Gevrey regularity of order two, with values in finite velocity measures. Integration against any initial probability gives the same bound; no density in velocity is assumed.

We next control the zero spatial mode. After normalising time by $\lambda$, a finite sphere covering and parabolic Harnack chains give constants $T_0>\tau_0>0$ and $\varepsilon_0\in(0,1)$, depending only on $n,\frac{\Lambda}{\lambda}$, such that
\[
P_{s,s+\frac{T_0}{\lambda}}(v,\cdot)\geq\varepsilon_0\nu_s,
\qquad
\nu_s=P_{s+\frac{T_0-\tau_0}{\lambda},
s+\frac{T_0}{\lambda}}(v_*,\cdot).
\]
Here $v_*$ is fixed and $\nu_s$ is a probability. To obtain uniform constants, we join $v$ to $v_*$ by a geodesic of length at most $\pi$, cover it by a bounded number of chart balls, and allocate a fixed Harnack time to each link. Subtracting the common lower bound and iterating gives, for every mass-zero signed or complex measure $\alpha$,
\begin{equation}
  \label{e:spherical-zero}
  \norm{\int_{\mathbb S^{n-1}} P_{s,t}(v,\cdot) \dd \alpha(v)}_{\TV}
  \leq q_0^{\lfloor\frac{\lambda(t-s)}{T_0}\rfloor}\norm\alpha_{\TV},
  \qquad q_0=1-\varepsilon_0<1.
\end{equation}
For $\beta_\lambda(t-s)\geq1$, summing the non-zero Fourier modes gives
\[
  \norm{\mathsf U_{s,t}\delta_{(x,v)}
    -\dd y\otimes P_{s,t}(v,\cdot)}_{\TV}
  \leq C\sum_{k\ne0}e^{-c\beta_\lambda(t-s)\abs k^{\frac12}}
  \leq Ce^{-\frac{c\beta_\lambda}{2}(t-s)}.
\]
Combining this with~\eqref{e:spherical-zero}, and integrating over the starting point, proves
\[
  \norm{\mathsf U_{s,t}\rho-\pi_t}_{\TV}
  \leq Ce^{-c\beta_\lambda(t-s)}
\]
for every initial probability $\rho$ and every velocity probability $\mu_s$, where $\mu_t=\mu_sP_{s,t}$ and $\pi_r=\dd x\otimes\mu_r$ for $r=s,t$. Translation invariance in position gives $\mathsf U_{s,t}\pi_s=\pi_t$. For $\beta_\lambda(t-s)<1$, both measures are probabilities, so their total-variation distance is at most two. Increasing $C$ therefore extends the estimate to all $s<t$. All constants depend only on $n,\frac{\Lambda}{\lambda}$.

\subsection{The spectral gap estimate}

We first estimate the evolution of densities relative to $\pi_s$ and $\pi_t$. Since $\mathsf U_{s,t}\pi_s=\pi_t$, positivity gives $\mathsf U_{s,t}(f\pi_s)\ll\pi_t$ for $f\in L^1(\pi_s)$. We define $S_{s,t}f$ by
\[
  \mathsf U_{s,t}(f\pi_s)=(S_{s,t}f)\pi_t,
  \qquad
  \Pi_s f:=\int f\dd\pi_s,
\]
where $\Pi_s f$ is viewed as a constant function. The operator $S_{s,t}$ preserves the integral and satisfies $S_{s,t}1=1$. Integrating the total-variation mixing bound against $f\pi_s$ gives the first estimate below, and positivity gives the second:
\[
  \begin{aligned}
  \norm{S_{s,t}-\Pi_s}_{L^1(\pi_s)\to L^1(\pi_t)}
    &\leq Ce^{-c\beta_\lambda(t-s)},\\
  \norm{S_{s,t}-\Pi_s}_{L^\infty(\pi_s)\to L^\infty(\pi_t)}
    &\leq2.
  \end{aligned}
\]
Interpolation therefore gives, with $m=\Pi_s f$,
\[
  \norm{S_{s,t}f-m}_{L^2(\pi_t)}
  \leq Ce^{-\frac{c}{2}\beta_\lambda(t-s)}
       \norm{f-m}_{L^2(\pi_s)}.
\]

For autonomous $A = A(v)$, the chosen evolution is time homogeneous. By~\eqref{e:spherical-zero}, the velocity evolution over time $\frac{T_0}{\lambda}$ is a strict contraction on the complete space of probability measures with the total-variation distance. Its unique fixed point $\mu$ is invariant at every time, since the evolution operators commute. Thus $\pi=\dd x\otimes\mu$ is stationary. Taking $\rho_\tau=\pi$ and a test function depending only on velocity in~\eqref{e:spherical-weak}, and dividing by $t-s>0$, gives the stationary identity~\eqref{e:spherical-invariant}.

We set $S_t=S_{0,t}$ with reference measure $\pi$. Positivity and preservation of the integral make $S_t$ a contraction on $L^2(\pi)$. Weak continuity of the measure solutions, followed by approximation of $L^2(\pi)$ test functions by smooth functions, gives weak continuity of $S_t$. The contraction property then gives strong continuity.

For this stationary reference measure, the Hilbert-space adjoint of $S_t$ acts on terminal functions through the same kernel:
\[
(S_t^*F)(x,v)
=\int F(x+y,w)K_{0,t}(v,\dd y,\dd w).
\]
The generator of $S_t^*$ acts on smooth functions as $v\cdot\nabla_x+A:\nabla_S^2$.
Spatial reflection, $(\mathsf Rf)(x,v)=f(-x,v)$, preserves $\pi$ and reverses the transport term. Consequently the solution of~\eqref{e:spherical-equation} associated with the chosen evolution is
\[
  u(t)=\mathsf R S_{t-s}^*\mathsf R\,u(s).
\]
Taking the adjoint and applying this reflection preserve the $L^2$ decay bound on the mean-zero subspace. This proves~\eqref{e:spherical-L2-gap}, after decreasing $c$.

On the subspace $\int f\dd\pi=0$, the generator $\mathcal G$ of $S_t$ has resolvent
\[
  (z-\mathcal G)^{-1}f
  =\int_0^\infty e^{-zt}S_tf\dd t,
  \qquad \operatorname{Re}z>-c\beta_\lambda.
\]
The decay estimate makes this integral convergent. The adjoint generator has the conjugate spectrum, and reflection is a unitary conjugation. Thus the generator of the solution semigroup above also has spectrum in $\{\operatorname{Re}z\leq-c\beta_\lambda\}$. This proves the spectral gap.

\section{Remarks and counter-examples}
\label{s:remarks}

\subsection{Measurable coefficients}

For smooth coefficients, approximation of $(\mathcal Lu)_+$ from above on inner cylinders gives the estimate for $L^p$ sources. For measurable coefficients, we mollify $A$ to $A_j$ in the variables on which it depends, preserving ellipticity, and keep $u$ fixed. Its positive source is bounded by
\[
    f_++\abs{(A_j-A):D_v^2u}.
\]
The last term tends to zero in $L^p$ on compact inner cylinders because $D_v^2u$ is locally bounded. The estimates are independent of coefficient derivatives, so we first let $j\to\infty$, then exhaust the original cylinder. Continuity of $u$ handles the kinetic boundary suprema; the radii and source norms also converge. This proves~\eqref{eq:abp-kin} without a global Hessian bound; see~\cite[Section~6]{Companion} for the inner cylinders and their boundary convergence.

For the spatial and Hölder estimates with $A=A(t,v)$, we use reflected coordinates and a smooth cutoff $\chi$, compactly supported in the interior and equal to one on the inner cylinder. With $U$ denoting the reflected solution, we set
\[
F_j=\chi\,\mathcal K_{A_j}U=\chi(A_j-A):D_v^2U
\quad\text{almost everywhere}.
\]
Then $\norm{F_j}_{L^p}\to0$ for fixed $p>2n+1$. The whole-space solution of $\mathcal K_{A_j}W_j=-F_j$, with zero data at a fixed terminal time after the time support of $F_j$, satisfies $\norm{W_j}_\infty\to0$ on the finite time interval under consideration by~\eqref{e:Green-density}. Thus $U+W_j$ is homogeneous and smooth on the inner cylinder~\cite{Hormander1967}, and converges uniformly to $U$. The spatial derivative bounds pass to the limit in distributions, and the Hölder bound passes directly. Interior exhaustion, with the top time kept below $t_0$ until the last limit, completes the argument.

\subsection{Restricting the source to the positive set}

Only the source where $u$ is positive is needed in the maximum principle. The same comparison, applied to smooth convex approximations $\phi_\varepsilon$ of the positive part with $0\leq\phi_\varepsilon'\leq1$ and $\phi_\varepsilon'=0$ on $(-\infty,0]$, gives~\eqref{eq:abp-kin} with source norm
\[
    \norm{f_+}_{L^p(Q^-(P_0)\cap\{u>0\})}.
\]
On $Q_R^-(P_0)$, kinetic scaling gives the source term
\[
    C_{n,\lambda,\Lambda,p}R^{2-\frac{4n+2}{p}}
    \norm{f_+}_{L^p(Q_R^-(P_0)\cap\{u>0\})}.
\]
Indeed, $\mathcal L\phi_\varepsilon(u)\leq\phi_\varepsilon'(u)f$, so the positive source vanishes outside $\{u>0\}$; we now let $\varepsilon$ decrease to zero. For measurable coefficients, we apply this chain rule first with $A_j$ and use the smooth-coefficient estimate before passing to the limit. The non-positive gradient-square term is discarded, so no extra integrability of $D_v^2\phi_\varepsilon(u)$ is required.

\subsection{The Harnack comparison}
We use the full-space comparison of~\cite[Lemma~8.7]{Companion}, whose proof combines H\"older continuity with propagation of positive lower bounds. It does not apply to an evolution killed on a velocity interval of one sign: such paths cannot return to the same position at a later time. General local Harnack and weak Harnack estimates need further arguments; see~\cite{KrylovSafonov} for the parabolic theory and~\cite[Theorem~A.1 and Remark~12]{GuerandImbert} for kinetic covering.

\subsection{Decay, scaling and comparison with Kolmogorov's fundamental solution}

The exponential decay in~\eqref{eq:DF} comes from repeating a fixed contraction. Kinetic scaling determines the combination $(t-s)\lambda^{\frac{1}{3}}\abs\xi^{\frac{2}{3}}$, while iteration gives an exponential that is linear in this quantity.

For constant $A$, Kolmogorov's fundamental solution gives the exact formula
\[
  \widehat K_{s,t}^{\xi}(v,\dd w)
  =e^{-i\frac{(t-s)\xi\cdot(v+w)}{2}}
   e^{-\frac{(t-s)^3\xi\cdot A\xi}{12}}
   P_{s,t}(v,\dd w),
\]
and hence
\[
  \norm{\widehat K_{s,t}^{\xi}(v,\cdot)}_{\TV}
  =\exp\left(-\frac{(t-s)^3}{12}\xi\cdot A\xi\right).
\]
Its exponent is comparable to $\lambda(t-s)^3\abs\xi^2$, with comparison constants depending only on $\frac{\Lambda}{\lambda}$. Thus both estimates become effective on the time scale $\lambda^{-\frac{1}{3}}\abs\xi^{-\frac{2}{3}}$, although the constant-coefficient kernel decays faster beyond that scale. Proposition~\ref{p:decay} preserves this time scale for measurable coefficients; we do not claim that its dependence on elapsed time is optimal.

The exponent $\frac{1}{3}$ in the localised estimate of Proposition~\ref{p:decay-loc} comes from balancing the two terms in~\eqref{e:local-Dirichlet} and is not asserted to be optimal.

\subsection{Counter-examples}
\label{ss:scaling-obstructions}

The constructions below keep positive height as the source tends to zero, or make the height grow with a bounded source.

For the elliptic and parabolic scaling bounds, we shrink a smooth bump of fixed height to spatial scale $r$, using time scale $r^2$ in the parabolic case. The source norms have orders $r^{\frac{n}{p}-2}$ and $r^{\frac{n+2}{p}-2}$, respectively. This excludes $p<\frac{n}{2}$ and $p<\frac{n+2}{2}$.

The additional radial obstruction comes from Pucci's example. For $n\geq2$ and $a=1-(n-1)\kappa^{-1}>0$, the function $1-|x|^a$ solves the homogeneous elliptic equation away from zero for a matrix with radial eigenvalue $\Lambda$ and tangential eigenvalues $\lambda$. Smoothing inside $B_\varepsilon$ gives a source of size $O(\varepsilon^{a-2})$ and hence an $L^p$ norm of order $\varepsilon^{\frac{n}{p}+a-2}$. This excludes $p<\frac{n\kappa}{\kappa+n-1}$; the remaining parameter range is covered by the scaling bound. To obtain the parabolic obstruction with zero initial data, we subtract the homogeneous Dirichlet evolution of the smoothed profile from that profile. The source is unchanged and independent of time. Uniform decay of the evolution leaves a positive height at a fixed time. Thus the same radial lower bound also applies in the parabolic case.

For the kinetic endpoint $p=1+2n$, a single rescaled bump has a source norm independent of scale. Instead, we choose $u_0$ smooth up to $t=0$, equal to one on $Q_{\frac{1}{2}}^-(0)$ and zero near the initial and lateral boundary of $Q_1^-(0)$. We extend it by zero only across those faces and set
\[
  u_N(t,x,v)=\sum_{k=1}^N\frac{1}{k} u_0\left(2^{2k}t,2^{3k}x,2^kv\right).
\]
These finite sums are smooth, vanish on $\partial_kQ_1^-(0)$ and have height $u_N(0,0,0)=\sum_{k=1}^N k^{-1}$. Their sources lie in disjoint kinetic annuli. At the critical exponent, each summand contributes $Ck^{-p}$ to $\norm{\mathcal Lu_N}_{L^p}^p$. The source norms therefore remain bounded while the heights diverge. This proves failure of the strong Lebesgue estimate at the endpoint, even for constant coefficients. The time integral in the proof for $A(t,v)$ diverges logarithmically at $p=1+2n$. For smooth coefficients, the argument admits a Lorentz-space refinement with the source measured in $L^{1+2n,1}$; see~\cite[Remark~5.4]{Companion} for the weak Green-density estimate that gives this endpoint.

For autonomous kinetic coefficients, the obstruction is stronger. The companion paper~\cite[Appendix~C]{Companion} gives a stationary construction: a positive profile $H$, vanishing only at the origin, solves $A(x,v):D_v^2H-v\cdot\nabla_xH=0$ almost everywhere away from it for a suitable autonomous uniformly elliptic field $A$, and satisfies $H(r^3x,rv)=r^\alpha H(x,v)$ for $0<\alpha<1$. The profile $1-H$ has positive height and zero boundary values on $\{H<1\}$. Flattening it near the origin confines its source to a kinetic annulus of volume $O(\varepsilon^{4n})$, with source size $O(\varepsilon^{\alpha-2})$. Hence
\[
    \norm{f_+}_{L^p}=O\bigl(\varepsilon^{\alpha-2+\frac{4n}{p}}\bigr).
\]

The stationary profile alone does not contradict the time-dependent estimate, because its height is already present in the initial data. As explained in~\cite[Appendix~C]{Companion}, subtracting a decaying super-solution and taking a smooth approximation of the positive part produces zero initial and lateral values, while retaining a fixed positive height at a fixed later time. The positive source satisfies the same bound, and the time interval is independent of $\varepsilon$. For each fixed $\alpha$, the ellipticity bounds also stay fixed as $\varepsilon\to0$. The source norm tends to zero when $p<\frac{4n}{2-\alpha}$. Taking $\alpha$ close to one excludes every $p<4n$ for a suitable ellipticity ratio; it does not assert failure below $4n$ for each fixed ratio.

\end{document}